\def\hatgap{1pt}
\def\subdown{-2pt}
\newcommand\reallywidehat[2][]{%
	\renewcommand\stackalignment{l}%
	\stackon[\hatgap]{#2}{%
		\stretchto{%
			\scalerel*[\widthof{$#2$}]{\kern-.6pt\bigwedge\kern-.6pt}%
			{\rule[-\textheight/2]{1ex}{\textheight}}%WIDTH-LIMITED BIG WEDGE
		}{0.5ex}% THIS SQUEEZES THE WEDGE TO 0.5ex HEIGHT
		_{\smash{\belowbaseline[\subdown]{\scriptstyle#1}}}%
}}
\newcommand{\be}{\begin{equation}}
\newcommand{\ee}{\end{equation}}
\newtheorem{theorem}{Theorem}[section]
\newtheorem{lemma}[theorem]{Lemma}
\newtheorem{question}[theorem]{Question}
\newtheorem{conjecture}[theorem]{Conjecture}
\newcommand{\norm}[1]{\left\Vert #1\right \Vert}
\newcommand{\size}[1]{\fontsize{10pt}{\baselineskip}\selectfont{#1}}
\newcommand{\mn}{/-3}
\newcommand{\nn}{/3}
\newcommand{\fqudit}[5]{
\node at (#1-#3,#2+#4) {\size{$#5$}};
\draw (#1,#2) --(#1,#2+#4)  arc (0:-180:#3) -- (#1-#3-#3,#2);
}
\newcommand{\fdoublequdit}[6]
{
\fqudit{#1}{#2}{3*#3}{#4}{}
\fqudit{#1-2*#3}{#2}{#3}{#4}{#5}
\node at (#1-3*#3, #2+#4+2*#3) {\size{$#6$}};
}
\newcommand{\fmeasure}[5]{
\node at (#1-#3,#2-#4) {\size{$#5$}};
\draw (#1,#2) --(#1,#2-#4)  arc (0:180:#3) -- (#1-#3-#3,#2);
}
\newcommand{\FS}{\mathfrak{F}}
\newcommand{\bR}{\mathbb{R}}
\newcommand{\bN}{\mathbb{N}}
\newcommand{\fA}{\mathfrak{A}}
\newcommand{\cN}{\mathcal{N}}
\newcommand{\cM}{\mathcal{M}}
\newcommand{\cC}{\mathcal{C}}
\newcommand{\cA}{\mathcal{A}}
\newcommand{\cB}{\mathcal{B}}
\newcommand{\sP}{\mathscr{P}}
\newcommand{\fF}{\mathfrak{F}}
\renewcommand{\geq}{\geqslant}
\renewcommand{\leq}{\leqslant}
\newcommand{\tr}{tr} %trace on the subfactor
\def\l@section{\@tocline{1}{0pt}{1pc}{}{}}
\def\l@subsection{\@tocline{2}{0pt}{1pc}{4.6em}{}}
\def\l@subsubsection{\@tocline{3}{0pt}{1pc}{7.6em}{}}
\renewcommand{\tocsection}[3]{%
  \indentlabel{\@ifnotempty{#2}{\makebox[2.3em][l]{%
    \ignorespaces#1 #2.\hfill}}}#3}
\renewcommand{\tocsubsection}[3]{%
  \indentlabel{\@ifnotempty{#2}{\hspace*{2.3em}\makebox[2.3em][l]{%
    \ignorespaces#1 #2.\hfill}}}#3}
\renewcommand{\tocsubsubsection}[3]{%
  \indentlabel{\@ifnotempty{#2}{\hspace*{4.6em}\makebox[3em][l]{%
    \ignorespaces#1 #2.\hfill}}}#3}
\title{Quantum Fourier Analysis}
\author{Arthur Jaffe}
\address{Harvard University, Cambridge, MA 02138}
\email{jaffe@g.harvard.edu}
\author {Chunlan Jiang}
\address {Hebei Normal University} 
\email{cljiang@hebtu.edu.cn}
\author {Zhengwei Liu}
\address {Tsinghua University}
\email {liuzhengwei@mail.tsinghua.edu.cn}
\author {Yunxiang Ren}
\address{Harvard University, Cambridge, MA 02138}
\email {yren@g.harvard.edu}
\author{Jinsong Wu}
\address {Institute for Advanced Study in Mathematics, Harbin Institute of $\phantom{xxTT}$ Technology}
\email {wjs@hit.edu.cn}
\begin{document}

\begin{abstract}
{\em Quantum Fourier analysis} is a new subject that combines an algebraic Fourier transform (pictorial in the case of subfactor theory) with analytic estimates.  This provides interesting tools to investigate phenomena such as quantum symmetry.  We establish bounds on the quantum Fourier transform $\FS$, as a map between suitably defined $L^{p}$ spaces, leading to a new uncertainty principle for relative entropy. We cite several applications of the quantum Fourier analysis in subfactor theory, in category theory, and in quantum information. We suggest a new topological inequality, and we outline several open problems.
\end{abstract}

\maketitle

\tableofcontents
In this paper we explore {\em quantum Fourier analysis} (QFA), a new subject revolving around the study of Fourier analysis of quantum symmetries. The discovery of such symmetries emerged in the 1970's, and has flourished ever since. It represents a major advance both in mathematics and in physics, as well as in the relation between these two subjects. Thus QFA adds a new dimension to the over 200-year old subject of classical Fourier analysis (CFA), analyzing the Fourier transform $F$.

CFA led to insights and solutions of problems in almost every field of mathematics, including partial differential equations, probability theory, number theory, representation theory, topology, geometry, etc.  It ultimately led to the categorization of Fourier duality~\cite{Tannaka,Krein}. The Hausdorff-Young inequality is a bound on the norm $M_{p}=\norm{F}_{L^{p}\to L^{q}}$, where $q=p/(p-1)$. 
Hirschman discovered that differentiating $M_{p}$ gives an uncertainty principle for the Shannon entropy, generalizing the well-known Heisenberg principle. He and Everett conjectured the optimal inequality~\cite{Hirschman,Everett}. Deep and beautiful proofs were found~\cite{Beckner, BiaMyc75, BrascampLieb}.  
% Using $M_p\leqslant M_2=1$, and differentiating the Hausdorff-Young inequality at $p=2-$, yields an uncertainty principle for Shannon entropy. 

Classical hypercontractivity states  $\norm{e^{-tH}}_{L^{2}\to L^{q}}\leqslant 1$, where  $H$ is a simple harmonic oscillator Hamiltonian with unit angular frequency and $e^{{2t}}\geqslant q-1\geqslant 1$~\cite{Nelson1,Glimm,Nelson2}.  The classical Hausdorff-Young inequality is a consequence of $F=e^{i\pi H/2}$~\cite{Beckner}. Further inequalities can be found in many papers as~\cite{Federbush,Gross,Carlen-Lieb,Stein,Janson,Xu,Junge,King}, suggesting in retrospect a  bridge from CFA to QFA. 

\section{Quantum Fourier Analysis}
A quantum Fourier transform $\mathfrak{F}$ defines Fourier duality between quantum symmetries, which could be analytic, algebraic, geometric, topological and categorical. The quantum symmetries could be finite or infinite, discrete or continuous, commutative or non-commutative. 
In certain contexts $\mathfrak{F}$ can be defined pictorially---as in the picture language program~\cite{JL18}.  QFA is the study of structures involving $\mathfrak{F}$.  

It is possible to estimate various norms $\norm{\mathfrak{F}}_{L^{p}\to L^{q}}$ as transformations between non-commutative $L^p$ spaces, and results in~\cite{Liu16, JLW16} represent early breakthroughs in the application and formulation of the new subject of QFA. As QFA is  more sophisticated than CFA, these subjects have differences as well as similarities; we explore them both.  

Let us consider an example of similarities and differences. In CFA, the extremizers of the Hausdorff-Young inequality (and many others) are Gaussians. In QFA on subfactors, the Hausdorff-Young also holds. The extremiziers are bishifts of biprojections. A biprojection is a projection whose quantum Fourier transform is a multiple of a projection \cite{Bisch}. So the behavior under Fourier transformation of the extremizers in QFA on subfactors are similar to those in CFA, while their algebraic properties differ.
%However a difference with CFA is that bishifts of a biprojection is not a biprojection. 

In this paper we give a unified view of QFA. We establish a new ``relative'' inequality between pictures that yields an uncertainty principle for relative entropy. We propose a new universal quantum inequality, namely~\eqref{eq:top-braslieb}, that  unifies many other quantum inequalities. This is similar to the way the Brascamp-Lieb inequality unites Young's inequality,  H\"{o}lder inequality, and others, in CFA. Throughout the paper we cite  applications of QFA. Finally in \S\ref{Sect:Questions}, we state some general goals for the future and some open questions. 

QFA reveals insight and intrinsic structure, as well as relations between, fusion rings, fusion categories, and subfactors. We show how  the ``Schur product property'' provides a powerful obstruction to distinguish mathematical objects. QFA also provides a new approach to quantum entanglement, uncertainty relations, and other problems in quantum information. We are certain that QFA will lead to other advances in many different fields.

% The methods of QFA already have led to interesting insights in subfactor theory, in category theory and in quantum information. 
% % For instance, QFA sheds light on how the Schur product property, namely, whether the positivity is preserved
% For example, we discuss the ``Schur product property'' of QFA, and how it  provides a powerful tool to  distinguish mathematical structures.
% % according to whether or not it holds.
% We are certain that QFA will lead to other advances in many different fields.

\section{QFA on Fusion Rings\label{subsec:fusion rings}}
Let us start with fusion rings, as introduced by Lusztig~\cite{Lus87}; this is an interesting quantum symmetry beyond groups. See \cite{EGNO15} for further results and references. 
A fusion ring $\fA$ is a ring which is free as a $\mathbb{Z}$-module, with a basis $\{x_1, x_2, \ldots, x_m\}, m\in\mathbb{N}$, with $x_{1}=1$, and such that

[$\mathcal{B}$1] $x_jx_k=\sum_{s=1}^m N_{j,k}^s x_s$, with $N_{j,k}^s\in \bN$, and 

[$\mathcal{B}$2] there exists an involution $*$ on $\{1, 2, \ldots, m\}$ such that $N_{j, k}^1=\delta_{j, k^*}$ inducing an anti-isomorphism of $\fA$, given by $x_k^*:=x_{k^*}$ and 
$x_k^* x_j^*=(x_jx_k)^*$.

From a fusion ring, one can construct two $C^*$-algebras with faithful tracial states, and a unitary Fourier transform between them. 
We recall this construction and the quantum Fourier analysis on fusion rings as studied in \cite{LPW19}. Define a unital $^*$-algebra over $\mathbb{C}$ with basis $\{x_j\}$, with  multiplication given by property [$\cB$1], and with adjoint given by $^*$ in [$\cB$2]. Define a linear functional $\tau$ given by the Dirac measure on the basis, $\tau(x_j)=\delta_{j,1}$. From $[\cB2]$ we infer that $\tau$ is a strictly-positive trace and defines an inner product. This gives the $C^*$-algebra $\cB$ by the  Gelfand-Naimark-Segal construction.

% Properties [$\cB$1] and [$\cB$2] induce a unital $*$-algebra structure on $\fA$, denoted by $\cB$. Define the Dirac measure at $x_1$ as the linear functional $\tau(x_j)=\delta_{j,1}$. It follows that $\tau$ is strictly positive, and it defines a $C^*$-representation by the Gelfand-Naimark-Segal construction. 

One can define a second (in this case abelian) $C^*$-algebra $\cA$ from $\fA$ with multiplication $\diamond$, adjoint $\#$, given by another strictly positive linear functional $d$ on $\cA$.  Let  

[$\cA$1] $x_j \diamond x_k =\delta_{j,k} \,d(x_j)^{-1} x_j$,

[$\cA$2] $x_j^{\#}=x_j$.  

\noindent  
Here $d(\cdot)$ is defined linear, by sending a basis element $x_j$ to $d(x_j)$, the operator norm of the fusion matrix $M_j$, with entries $(M_j)_{s,t}=N_{j,s}^{t}$. This is called the Perron-Frobenius dimension of $x_j$. 
The trace $d$ (resp. $\tau$) on the finite-dimensional $C^*$-algebra $\cA$ (resp. $\cB$)  defines   $L^p$ norms on $\cA$ (resp. $L^q$ norms on $\cB$) by, 
\begin{equation*}
    \norm{a}_{\cA_p}=d(|a|^p)^{1/p}\;, \qquad\norm{b}_{\cB_q}=\tau(|b|^q)^{1/q}\;.
\end{equation*}

Then $\cA$ and $\cB$ are two $C^*$-algebras with the same basis. 
We use the classical notation $\fF:x\mapsto \widehat{x}$ for the \textit{Fourier transform} as the linear map from $\cA$ to $\cB$ defined by 
$\fF(x_j)=x_j$.  
The Fourier transform can be extended to a map from $L^p(\cA,d)$ to $L^q(\cB,\tau)$ for $1/p+1/q=1$.  
Plancherel's formula follows as: $\|\fF(x)\|_{\cB_{2}}=\|x\|_{\cA_{2}}$\;.

We now summarize several theorems proved in \cite{LPW19} about QFA  on fusion rings, including the quantum Schur product theorem (QSP), the quantum Hausdorff-Young inequality (QHY) with $1/p+1/q=1$, the quantum Young inequality (QY) with $1/p+1/q=1+1/r$, and the basic quantum uncertainty principles (QUP)---defined in terms of the von Neumann entropy,
\begin{align*}
 H_{\cA}(|x|^2)&=-d((x^{\#}\diamond x)\diamond\log(x^{\#}\diamond x));\\  H_{\cB}(|{\widehat x}|^2)&=-\tau((\widehat{x}^* \widehat{x})\log(\widehat{x}^*\widehat{x})), 
\end{align*}
and in terms of the support $\mathcal{S}_{\cA}(x)=d(R(x))$, $\mathcal{S}_{\cB}(\widehat{x})=\tau(R(\widehat{x}))$, where $R(\cdot)$ is the range projection.
\begin{theorem} \label{Thm:Fusion}
For non-zero $x,y\in\cA$, 

\noindent
$[\text{QSP}]:$ $\fF^{-1}({\widehat x}{\widehat y}) \geq_{\cA} 0$, whenever both $ x,y\geq_{\cA} 0$.

\noindent
$[\text{QHY}]:$ $\| \widehat x\|_{\cB_q}\leq \|x\|_{\cA_p}$, for $1\leq p\leq 2$.

\noindent
$[\text{QY}]:$ $\| \fF^{-1}({\widehat x}{\widehat y})\|_{\cA_r}\leq \|x\|_{\cA_p}\|y\|_{\cA_q}$, for $1\leq p,q,r\leq \infty$.

\noindent
$[\text{QUP-}1]:$ 
$
H_{\cA}(|x|^2)
+ H_{\cB}(|\widehat x|^2)
+ 2\|x\|_{\cA_2}^2\log \|x\|_{\cA_2}^2
\geq 0\;.
$

\noindent
$[\text{QUP-}2]:$ $\mathcal{S}_{\cA}(x)\mathcal{S}_{\cB}(\widehat {x})\geq 1$.
\end{theorem}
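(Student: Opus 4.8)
The plan is to reduce all five statements to two elementary endpoint facts, together with interpolation and one differentiation argument. The statement $[\text{QSP}]$ is essentially built into the definitions: since the $x_j$ are, up to the positive scalars $d(x_j)$, the minimal projections of the abelian algebra $\cA$, positivity $x,y\geq_{\cA}0$ means exactly that the expansion coefficients $a_j,b_k$ of $x=\sum_j a_jx_j$ and $y=\sum_k b_kx_k$ are nonnegative; then $\fF^{-1}(\widehat x\widehat y)=\sum_s\big(\sum_{j,k}a_jb_kN_{jk}^s\big)x_s$ has nonnegative coefficients because every $N_{jk}^s\in\bN$, which is precisely $\fF^{-1}(\widehat x\widehat y)\geq_{\cA}0$. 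The genuinely analytic input I would isolate first is the $L^1\to L^\infty$ endpoint $\|\widehat x\|_{\cB_\infty}\leq\|x\|_{\cA_1}$: because $\tau(x_j^*x_k)=N_{j^*,k}^1=\delta_{j,k}$, the basis $\{x_s\}$ is orthonormal in $L^2(\cB,\tau)$, so the (faithful) left regular representation identifies $\|x_j\|_{\cB_\infty}$ with the operator norm $\|M_j\|=d(x_j)$, and the triangle inequality gives $\|\widehat x\|_{\cB_\infty}\leq\sum_j|c_j|\,d(x_j)=\|x\|_{\cA_1}$ for $\widehat x=\sum_j c_jx_j$.

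Granting Plancherel $\|\widehat x\|_{\cB_2}=\|x\|_{\cA_2}$ as the $L^2\to L^2$ endpoint (norm $1$) and the above as the $L^1\to L^\infty$ endpoint (norm $\leq1$), $[\text{QHY}]$ follows by complex interpolation of the family $\fF\colon L^p(\cA,d)\to L^{p'}(\cB,\tau)$; as $\cB$ is noncommutative I would invoke the Stein--Kosaki interpolation theorem for noncommutative $L^p$ spaces, which is unproblematic since both algebras are finite-dimensional. For $[\text{QY}]$ I would run a bilinear interpolation from the three extreme cases $(p,q,r)\in\{(1,1,1),(1,\infty,\infty),(\infty,1,\infty)\}$. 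Each corner is a direct estimate on coefficients which, after the triangle inequality, reduces to the dimension identities $\sum_s N_{jk}^s d(x_s)=d(x_j)d(x_k)$ and $\sum_k N_{jk}^s d(x_k)=d(x_j)d(x_s)$, i.e. multiplicativity and Frobenius reciprocity of the Frobenius--Perron dimension. Bilinear Riesz--Thorin interpolation then fills in the whole Young region $1/p+1/q=1+1/r$.

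For $[\text{QUP-}2]$ only the $L^1\to L^\infty$ endpoint, Plancherel, and two Cauchy--Schwarz estimates are needed. Writing $R(\cdot)$ for the range projection, Cauchy--Schwarz for $d$ gives $\|x\|_{\cA_1}=d(|x|R(x))\leq\mathcal{S}_{\cA}(x)^{1/2}\|x\|_{\cA_2}$, and for $\tau$ gives $\|\widehat x\|_{\cB_2}^2=\tau(|\widehat x|^2R(\widehat x))\leq\mathcal{S}_{\cB}(\widehat x)\,\|\widehat x\|_{\cB_\infty}^2$. Chaining these with Plancherel and the endpoint,
\[
\|x\|_{\cA_2}=\|\widehat x\|_{\cB_2}\leq\mathcal{S}_{\cB}(\widehat x)^{1/2}\|\widehat x\|_{\cB_\infty}\leq\mathcal{S}_{\cB}(\widehat x)^{1/2}\|x\|_{\cA_1}\leq\mathcal{S}_{\cB}(\widehat x)^{1/2}\mathcal{S}_{\cA}(x)^{1/2}\|x\|_{\cA_2},
\]
and dividing by $\|x\|_{\cA_2}\neq0$ yields $\mathcal{S}_{\cA}(x)\mathcal{S}_{\cB}(\widehat x)\geq1$.

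Finally, $[\text{QUP-}1]$ is the infinitesimal form of $[\text{QHY}]$ at $p=2$ (the Hirschman--Beckner method). Set $\Phi(p)=\log\|x\|_{\cA_p}-\log\|\widehat x\|_{\cB_{p'}}$ on $[1,2]$; $[\text{QHY}]$ gives $\Phi\geq0$ and Plancherel gives $\Phi(2)=0$, so the one-sided derivative obeys $\Phi'(2^-)\leq0$. Differentiating, using $\tfrac{d}{dp}d(|x|^p)=d(|x|^p\log|x|)$, the identity $d(|x|^2\log|x|^2)=-H_{\cA}(|x|^2)$, and $\tfrac{d}{dp}p'|_{p=2}=-1$, one finds $\tfrac{d}{dp}\log\|x\|_{\cA_p}\big|_{p=2}=-\tfrac14\log\|x\|_{\cA_2}^2-\tfrac1{4\|x\|_{\cA_2}^2}H_{\cA}(|x|^2)$ and the matching $\cB$-side term, and $\Phi'(2)\leq0$ rearranges exactly to $H_{\cA}(|x|^2)+H_{\cB}(|\widehat x|^2)+2\|x\|_{\cA_2}^2\log\|x\|_{\cA_2}^2\geq0$. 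I expect this last step to be the main obstacle: one must justify that the noncommutative norm $p\mapsto\|\widehat x\|_{\cB_{p'}}$ is differentiable at the endpoint and that the one-sided derivative inequality is legitimate, carrying out the functional calculus $\tfrac{d}{dp}a^p=a^p\log a$ and the interchange with $\tau$ on the range projection of $\widehat x$ so as to avoid $\log 0$. Everything else is bookkeeping with exponents and the two endpoint facts.
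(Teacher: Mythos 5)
The paper itself contains no proof of this theorem---it is stated as a summary of results established in \cite{LPW19}---and your blind reconstruction is essentially the argument of that reference: [QSP] from nonnegativity of the structure constants $N_{j,k}^s$ together with the fact that positivity in the abelian algebra $\cA$ is coefficientwise, the endpoint $\|\widehat x\|_{\cB_\infty}\leq\|x\|_{\cA_1}$ (via the faithful GNS representation and $\|x_j\|_{\cB_\infty}=\|M_j\|=d(x_j)$) plus complex interpolation for [QHY] and the corner estimates with the Frobenius--Perron identities plus bilinear interpolation for [QY], the Donoho--Stark chain for [QUP-2], and Hirschman--Beckner differentiation of the Hausdorff--Young bound at $p=2$ for [QUP-1]. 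Your proposal is correct, and the endpoint-differentiability concern you flag is harmless here: since $\cA$ and $\cB$ are finite dimensional, $p\mapsto d(|x|^p)$ and $p\mapsto\tau(|\widehat x|^p)$ are finite sums of smooth functions $\lambda^p$ over the spectrum restricted to the support, so the derivative computation and the one-sided inequality $\Phi'(2^-)\leq 0$ are routine.
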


% \subsection{Finite-Dimensional Kac algebras}
% A (finite-dimensional) complex Kac algebra $K$ is a Hopf algebra $(H,\nabla,\eta,\Delta,\epsilon,S)$, such that the co-multiplication $\Delta$ is a *-homomorphism w.r.t. the multiplication $\nabla$. 
% Its dual Hopf algebra $K^*$ is also a Kac algebra. 
% The Kac algebra $K$ admits a tracial state $\phi$ as a Haar measure.
% The multiplicative unitary $W$ is defined as 
% \[
% W^*(x\otimes y)=(x\otimes y)(\Delta(y)(x\otimes 1)), ~\forall~ x,y \in K,
% \]
% w.r.t. the state $\phi\otimes \phi$.
% The Fourier transform $\fF: K \to K^*$ is defined as
% \[
% \fF(x)=(\phi\otimes \iota)(W(x\otimes 1)),
% \]
% where $\iota$ is the identity map on $K$.

% One can consider the Kac algebra $K$ as $\cA$ and its dual $K^*$ as $\cB$.
% The quantum inequalities in Theorem \ref{Thm:Fusion} hold on Kac algebras as special cases of subfactors which we discuss in the next section.

\section{QFA on Subfactors} Modern subfactor theory was initiated by V. Jones in 1983 by his remarkable index theorem \cite{Jon83}. A subfactor of type $\mathrm{II}_1$ is an inclusion of $\mathrm{II}_1$ factors $\cN\subset \cM$ and its index describes the relative size of these two factors. Jones' index theorem asserts that the index $\delta^{2}$ of a subfactor belongs to the set $\{4\cos^2(\frac{\pi}{n}):n=3,4,\cdots\}\cup[4,\infty]$ and every possible value can be realized as the index of a subfactor. Subfactor theory turns out to be a natural framework to study quantum symmetry in statistical physics and quantum field theory, see e.g. \cite{EvaKaw98}.
% Given a finite-index $\mathrm{II}_1$ subfactor $\cN \subset \cM$, let $\cN\subset \cM \subset\cM_1\subset\cdots$ be the Jones tower (with $\cM_{-1}=\cN,\cM_0=\cM$). We denote the higher relative commutants, the $C^*$-algebra $\cN'\cap \cM_{k}$ (resp. $\cM'\cap \cM_{k+1}$) with the Markov trace $\tr$, by $\sP_{n,+}$ (reps. $\sP_{n,-}$). In this section we set $\cA$ to be $\sP_{2,+}$ and $\cB$ to be $\sP_{2,-}$.  
% (Relative recent) Given a finite-index $\mathrm{II}_1$ subfactor $\cN \subset \cM$, let $X$ be the $\cN$-$\cM$-bimodule $_N L^2(M)_M$. Let $\overline{X}$ be the dual $\cM$-$\cN$-bimodule. Denote the Connes fusion of $X$ and $\overline{X}$ by $X\overline{X}$. Sometimes, one writes $X\overline{X}=X\otimes_M\overline{X}$. Let $\cA$ denote  the hom space $\hom_{\cN-\cN}\left(X\overline{X},X\overline{X}\right)$, namely the bimodular maps on the $\cN$-$\cN$-bimodule $X\overline{X}$. Then $\cA$ is  a $C^*$-algebra equipped with the Markov trace on subfactors. Similarly, denote $\cB$ the $C^*$-algebra with respect to $\overline{X}X$. Moreover, a family of $C^*$-algebras arises in this fashion: these are  $\sP_{2n,+}$, $\sP_{2n,-},\sP_{2n+1,+},\sP_{2n+1,-}$,  the $C^*$-algebra with respect to $(X\overline{X})^n$, $(\overline{X}X)^n$, $(X\overline{X})^nX$, $(\overline{X}X)^n\overline{X}$. The \textit{subfactor planar algebra} is the collection of graded $C^*$-algebras $\sP_\bullet=\{\sP_{n,\pm}:n\geq 0\}$~\cite{Jon99}. 
Subfactor planar algebras~\cite{Jon99} provide a pictorial tool to study  subfactor theory. A planar algebra $\sP_\bullet=\{\sP_{n,\pm}:n\geq 0\}$ is a family of finite-dimensional vector spaces with an action of the operad of \textit{planar tangles}, similar to \textit{topological quantum field theory}~\cite{Atiyah88}. One represents an element in $\sP_{n,\pm}$ (called an $n$-box) by a labelled rectangle with $2n$ strings attached to its boundary. Each vector space $\sP_{n,\pm}$ is equipped with an involution $^*$,
% called the adjoint
which is compatible with the invloution of planar tangles. This involution $^*$ is called the adjoint, and given pictorially by reflection. 

Any $w\in\sP_{1,+}$ satisfies the spherical condition, and any $x\in\sP_{n,+}$ satisfies the ``reflection-positivity''  condition, 
\begin{equation}\label{equ: relative positivity}
\raisebox{-.25cm}{
	\begin{tikzpicture}
	\draw [fill=lightgray] (.25,.5) arc[radius=.1, start angle=180, end angle=90]--(.5,.6) arc[radius=.1, start angle=90, end angle=0]--(.6,0) arc[radius=.1, start angle=360, end angle=270]--(.35,-.1) arc[radius=.1, start angle=270, end angle=180];
	\draw [fill=white] (.05,0) rectangle (.45,.5);
	\node at (.25,.25) {$w$};
	\node at (0,.25) {\tiny $\$$};
	\end{tikzpicture}}=
\raisebox{-.3cm}{
	\begin{tikzpicture}
	\path [fill=lightgray] (-.15,-.15) rectangle (.55,.65);
	\begin{scope}[xscale=-1,shift={(-.5,0)}]
	\draw [fill=white] (.25,.5) arc[radius=.1, start angle=180, end angle=90]--(.5,.6) arc[radius=.1, start angle=90, end angle=0]--(.6,0) arc[radius=.1, start angle=360, end angle=270]--(.35,-.1) arc[radius=.1, start angle=270, end angle=180];
	\end{scope}
	\draw [fill=white] (.05,0) rectangle (.45,.5);
	\node at (.25,.25) {$w$};
	\node at (0,.25) {\tiny $\$$};
	\end{tikzpicture}}\;,\qquad
\raisebox{-.4cm}{
\begin{tikzpicture}
\draw (0,0) rectangle (.9,.35);
\draw [fill=yellow] (0,-.25) rectangle (.9,-.6);
\draw [fill=lightgray] (.05,0) rectangle (.15,-.25);
\draw [fill=lightgray] (.25,0) rectangle (.35,-.25);
\draw [fill=lightgray] (.75,0) rectangle (.85,-.25);
\node at (.28,-.125)[right] {\tiny $\cdots$};
\node at (.5,.25) {$x^{\phantom{*}}$};
\node at (.5,-.4) {$x^*$};
\node at (-.08,.125) {\tiny $\$$};
\node at (-.08,-.375) {\tiny $\$$};
\end{tikzpicture}}\geq 0\;.
\end{equation}
The action of planar tangles turns $\sP_{n,\pm}$ into $C^*$-algebras, and the trace gives a Hilbert-space representation by the GNS construction. We set $\cA=\sP_{2,+}$ and $\cB=\sP_{2,-}$. For elements $\cA$, one has pictorial representations for $x$, multiplication $xy$, the Fourier transform $\fF(x)$, the trace $\tr(x)$ as follows,
\vskip -.3cm
\begin{equation*}\label{Pictures-PA}
\scalebox{.95}{$
\raisebox{-.45cm}{
\begin{tikzpicture}
	\path [fill=lightgray] (.1,.75) rectangle (.4,-.25);
\draw (.1,.75)--(.1,-.25);
\draw (.4,.75)--(.4,-.25);
\draw [fill=white] (0,0) rectangle (.5,.5);
\node at (.25,.25) {$x$};
\node at (-.1,.25) {\tiny $\$$};
\end{tikzpicture}
}\;,
\raisebox{-.75cm}{
\begin{tikzpicture}
\path [fill=lightgray] (.1,-.55) rectangle (.4,1.05);
\draw (.1,-.55)--(.1,1.05);
\draw (.4,-.55)--(.4,1.05);
\draw [fill=white] (0,.2) rectangle (.5,-.3);
\draw [fill=yellow] (0,.3) rectangle (.5,.8);
\node at (.25,-.05) {$x$};
\node at (.25,.55) {$y$};
\node at (-.1+.03,-.05) {\tiny $\$$};
\node at (-.1+.03,.55) {\tiny $\$$};
\end{tikzpicture}
}\;,
% \raisebox{-.45cm}{
% \begin{tikzpicture}%Fouriertranfom1
% 	\path [fill=lightgray] (-.35,-.25) rectangle (.85,.75);
% \path [fill=white] (0,.4) arc[radius=.35, start angle=270, end angle=180]--(.85,.75) arc[radius=.35, start angle=360, end angle=270];
% \draw (0,.4) arc[radius=.35, start angle=270, end angle=180];
% \draw (.85,.75) arc[radius=.35, start angle=360, end angle=270];
% \path [fill=white] (0,.1) arc[radius=.35, start angle=90, end angle=180]--(.85,-.25) arc[radius=.35, start angle=0, end angle=90];
% \draw (0,.1) arc[radius=.35, start angle=90, end angle=180];
% \draw (.85,-.25) arc[radius=.35, start angle=0, end angle=90];
% \draw [fill=white] (0,0) rectangle (.5,.5);
% \node at (.25,.25) {$x$};
% \node at (.25,.58) {\tiny $\$$};
% \end{tikzpicture}
% }=
\raisebox{-.4cm}{
\begin{tikzpicture}%Fouriertranform2
\path[fill=lightgray] (-.3,.7) rectangle (.8,-.2);
\draw [fill=white] (.1,.7)--(.1,0) arc[radius=.15, start angle=360, end angle=180]--(-.2,.7);
\draw [fill=white] (.4,-.2)--(.4,.5) arc[radius=.15, start angle=180, end angle=0]--(.7,-.2);
\draw [fill=white] (0,0) rectangle (.5,.5);
\node at (.25,.25) {$x$};
\node at (-.1+.003,.25) {\tiny $\$$};
\end{tikzpicture}
}\;,
\raisebox{-.4cm}{
\begin{tikzpicture}
\draw [fill=lightgray] (.1,.5)--(.1,.6) arc[radius=.1, start angle=180, end angle=90]--(.6,.7) arc[radius=.1, start angle=90, end angle=0]--(.7,-.1) arc[radius=.1, start angle=0, end angle=-90]--(.2,-.2) arc[radius=.1, start angle=270, end angle=180]--(.1,0);
\draw [fill=white] (.4,.5) arc[radius=.1, start angle=180, end angle=0]--(.6,0) arc[radius=.1, start angle=360, end angle=180]--(.1,0);
\draw [fill=white] (0,0) rectangle (.5,.5);
\node at (.25,.25) {$x$};
\node at (-.1+.03, .25) {\tiny $\$$};
\end{tikzpicture}
}
$}\;.
\end{equation*}
\vskip -.2cm
%\begin{equation}\label{equ: spherical}
%\delta=
%\raisebox{-.2cm}{
%\begin{tikzpicture}
%\draw [fill=lightgray] (0,0) circle [radius=.25];
%\end{tikzpicture}
%}=
%\raisebox{-.25cm}{
%\begin{tikzpicture}
%\path [fill=lightgray] (-.3,-.3) rectangle (.3,.3);
%\draw [fill=white] (0,0) circle[radius=.25];
%\end{tikzpicture}
%}\;,\;
%\raisebox{-.25cm}{
%\begin{tikzpicture}
%\draw [fill=lightgray] (.25,.5) arc[radius=.1, start angle=180, end angle=90]--(.5,.6) arc[radius=.1, start angle=90, end angle=0]--(.6,0) arc[radius=.1, start angle=360, end angle=270]--(.35,-.1) arc[radius=.1, start angle=270, end angle=180];
%\draw [fill=white] (.05,0) rectangle (.45,.5);
%\node at (.25,.25) {$w$};
%\node at (0,.25) {\tiny $\$$};
%\end{tikzpicture}}=
%\raisebox{-.3cm}{
%\begin{tikzpicture}
%\path [fill=lightgray] (-.15,-.15) rectangle (.55,.65);
%\begin{scope}[xscale=-1,shift={(-.5,0)}]
%\draw [fill=white] (.25,.5) arc[radius=.1, start angle=180, end angle=90]--(.5,.6) arc[radius=.1, start angle=90, end angle=0]--(.6,0) arc[radius=.1, start angle=360, end angle=270]--(.35,-.1) arc[radius=.1, start angle=270, end angle=180];
%\end{scope}
%\draw [fill=white] (.05,0) rectangle (.45,.5);
%\node at (.25,.25) {$w$};
%\node at (0,.25) {\tiny $\$$};
%\end{tikzpicture}}\;.
%\end{equation}
%\vskip -.2cm
Define the convolution product $*$ on $\cA$ by 
\vskip -.3cm
\begin{equation}
   \hskip -.2cm\raisebox{-.35cm}{\scalebox{.75}{
\begin{tikzpicture}%convolution
	\path [fill=lightgray](.4,.5) arc [radius=.2, start angle=180, end angle=0]--(.8,0) arc[radius=.2, start angle=360, end angle=180]--(.4,.25)--(.1,.25)--(.1,-.3)--(1.1,-.3)--(1.1,.8)--(.1,.8)--(.1,.5);
\draw (.1,.8)--(.1,-.3);
\draw (1.1,.8)--(1.1,-.3);
\draw (.4,.5) arc[radius=.2, start angle=180, end angle=0];
\draw (.4,0) arc[radius=.2, start angle=180, end angle=360];
\draw [fill=white] (0,0) rectangle (.5,.5);
\draw [fill=yellow] (.7,0) rectangle (1.2,.5);
\node at (-.1+.03,.25) {\tiny $\$$};
\node at (.63,.25) {\tiny $\$$};
\node at (.25,.25) {$x$};
\node at (.95,.25) {$y$};
\end{tikzpicture}}
}=x*y=\fF\left(\fF^{-1}(x)\fF^{-1}(y)\right)\;.
\end{equation}
The $C^*$-algebra $\cB$ has a similar pictorial representation. These pictures not only make quantum Fourier analysis transparent; they also provide a precise framework for proofs.         
\begin{theorem}[Schur product theorem, Theorem 4.1 of~\cite{Liu16}]\label{thm: Schur Product}
For any $0\leq x,y\in\cA$ (or $0\leq x,y\in\cB$), one has $0\leq x*y$. 
\end{theorem}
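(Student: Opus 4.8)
The plan is to imitate the classical proof that the Schur (Hadamard) product of two positive semidefinite matrices is again positive semidefinite, transporting each step into the pictorial calculus of the planar algebra, with the convolution $*$ playing the role of the Schur product. Classically one writes a positive matrix as a nonnegative combination of rank-one projections $\xi\xi^*$, uses bilinearity of the Schur product, and checks that the Schur product of two such rank-one projections is again a rank-one projection $\zeta\zeta^*$ with $\zeta$ the entrywise product of the defining vectors; nonnegativity of the coefficients then finishes the argument. Accordingly, I would first use that $\cA=\sP_{2,+}$ is a finite-dimensional $C^*$-algebra, so every $x\geq_{\cA} 0$ decomposes spectrally as $x=\sum_k\lambda_k\,\xi_k\xi_k^*$ with $\lambda_k\geq 0$, where each $\xi_k$ is a vector in the GNS Hilbert space drawn pictorially as a $2$-box with one boundary segment left open, and $\xi_k^*$ is its vertical reflection. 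Writing $y=\sum_l\mu_l\,\eta_l\eta_l^*$ similarly and expanding by bilinearity of $*$, it suffices—since $\lambda_k\mu_l\geq 0$ and the positive cone is closed under nonnegative sums—to prove the claim for a single pair of rank-one positives.

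The core step is then the pictorial identity $(\xi\xi^*)*(\eta\eta^*)=\zeta\zeta^*$ for a suitable $\zeta$ built from $\xi$ and $\eta$. I would insert the four diagrams $\xi,\xi^*,\eta,\eta^*$ into the convolution tangle displayed above and use planar isotopy to slide the two ``upper'' boxes $\xi,\eta$ together and the two ``lower'' boxes $\xi^*,\eta^*$ together. The cup joining the inner strands of $x$ and $y$ contracts $\xi$ against $\eta$ into a single labelled box $\zeta$ occupying the top, while the cap performs the mirror-image contraction of $\xi^*$ against $\eta^*$ at the bottom. Because the adjoint is implemented by reflection and the convolution tangle is symmetric under that reflection, the bottom box is exactly $\zeta^*$, so the whole picture is the reflection-positive diagram $\zeta\zeta^*\geq_{\cA} 0$ of \eqref{equ: relative positivity}. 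Summing over $k,l$ with the nonnegative weights $\lambda_k\mu_l$ then gives $x*y\geq_{\cA} 0$, and the argument for $\cB=\sP_{2,-}$ is identical after exchanging the shadings.

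I expect the main obstacle to be the topological bookkeeping in this core step: one must verify that after the isotopy the strands and the $\$$ markings connect up so that the lower half is genuinely the reflection (adjoint) of the upper half—rather than some other rotation—and that the shaded regions match. This is precisely where the planarity of the tangle and the reflection-positivity condition \eqref{equ: relative positivity} enter, and it is the geometric input that a purely algebraic analogue of the Schur argument would lack. Once the single-pair identity $(\xi\xi^*)*(\eta\eta^*)=\zeta\zeta^*$ is checked diagrammatically, the remaining ingredients—spectral decomposition, bilinearity of $*$, and closure of the positive cone under nonnegative sums—are routine.
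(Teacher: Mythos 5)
Your core geometric observation is the right one, and it is the same mechanism the paper exploits: after inserting $\xi,\xi^*,\eta,\eta^*$ into the convolution tangle, the diagram is mirror-symmetric about the horizontal midline, with the bottom half the reflection (adjoint) of the top half, and positivity should come from reflection positivity. But there is a genuine gap in how you close the argument. The merged top half (the boxes $\xi,\eta$ together with the cup joining their inner strands) is \emph{not} a $2$-box: it has two strings going to the outer boundary and four strings crossing the mirror line, so it is a $3$-box $w$, and what the convolution diagram equals is the \emph{partial} composition of $w$ with its reflection $w^*$ along four of its six strings. This is neither multiplication in $\cA=\sP_{2,+}$ nor multiplication in $\sP_{3,+}$ (which would contract only three strings), so the identity ``$(\xi\xi^*)*(\eta\eta^*)=\zeta\zeta^*$ with $\zeta\in\cA$'' is not what the picture produces, and the positivity of this partial composition is not literally an instance of \eqref{equ: relative positivity}. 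Asserting that such a mirror-symmetric \emph{open} diagram is a positive operator is exactly the crux of the theorem; indeed, knowing $x*y=\zeta\zeta^*$ for some $\zeta\in\cA$ is \emph{equivalent} to the theorem, since every positive element of a $C^*$-algebra has that form.

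The standard way to fill this hole is the paper's proof: since $\tr$ is faithful, $x*y\geq 0$ if and only if $\tr((x*y)z)\geq 0$ for every $0\leq z\in\cA$; writing $x=x^{1/2}x^{1/2}$, $y=y^{1/2}y^{1/2}$, $z=z^{1/2}z^{1/2}$ and isotoping, the resulting \emph{closed} diagram is the pairing of a single half-diagram with its own reflection, i.e.\ an inner product $\langle w,w\rangle\geq 0$, which is what \eqref{equ: relative positivity} (positivity under reflection) does justify. So testing against $z$ is not an optional convenience; it is what converts your open mirror-symmetric picture into a closed one where the stated reflection-positivity axiom applies. (Alternatively one can invoke that a subfactor planar algebra underlies a $C^*$-category, in which $ww^*\geq 0$ holds for morphisms between different objects; but that structure is stronger than what the paper's axiom records, and using it amounts to assuming the needed fact.) Two smaller points: the spectral decomposition is superfluous---$\xi=x^{1/2}$, $\eta=y^{1/2}$ already give $x=\xi\xi^*$, $y=\eta\eta^*$, and summing rank-one terms buys nothing since ``rank one'' plays no role in the diagrammatics; and your description of $\xi_k$ as a GNS vector ``drawn as a $2$-box with one boundary segment left open'' is not meaningful here: GNS vectors for the trace are just elements of $\cA$, while genuine rank-one projections onto such vectors generally lie outside $\cA$ and cannot be fed into the convolution tangle at all.
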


\begin{proof}
%Since $\cA$ is a $C^*$-algebra, both $x$ and $y$ have positive square roots in $\cA$, and we need to show the positivity of
% Thus, the proof is an elementary consequence of positivity of the Markov trace. In pictures, the proof is:
Since $\tr$ is faithful, it suffices to show that $\tr\left((x*y)z\right)\geq 0$ for any $0\leq z\in \cA$. Since $\cA$ is a $C^*$-algebra, there exist elements $x^{\frac{1}{2}},y^{\frac{1}{2}},z^{\frac{1}{2}}\in\cA$, such that
\begin{align}
\tr((x*y)z)=&
\raisebox{-.55cm}{
\begin{tikzpicture}
\draw [fill=lightgray] (.1,.4)--(.45,.8)--(.45,1.2) arc[radius=.1, start angle=180, end angle=90]--(1.3,1.3) arc[radius=.1, start angle=90, end angle=0]--(1.4,-.3) arc[radius=.1, start angle=360, end angle=270]--(.55,-.4) arc[radius=.1, start angle=270, end angle=180]--(.45,-.2)--(.1,.1);
\draw [fill=white] (1.1,.4)--(.75,.8)--(.75,1.1) arc[radius=.1, start angle=180, end angle=90]--(1.2,1.2) arc[radius=.1, start angle=90, end angle=0]--(1.3,-.2) arc[radius=.1, start angle=360, end angle=270]--(.85,-.3) arc[radius=.1, start angle=270, end angle=180]--(1.1,.1);
\draw [fill=white] (.4,.4) arc[radius=.1, start angle=180, end angle=90]--(.7,.5) arc[radius=.1, start angle=90, end angle=0]--(.8,.1) arc[radius=.1, start angle=360, end angle=270]--(.5,0) arc[radius=.1, start angle=270, end angle=180];
\draw [fill=white] (0,.1) rectangle (.5,.4);
\node at (.25,.25) {$x$};
\node at (-.04,.25) {\tiny $\$$};
\draw [fill=yellow] (.7,.1) rectangle (1.2,.4);
\node at (.66,.25) {\tiny $\$$};
\node at (.95,.25) {$y$};
\draw [fill=cyan] (.35,.8) rectangle (.85,1.1);
\node at (.35-.04,.95) {\tiny $\$$};
\node at (.6,.95) {$z$};
\end{tikzpicture}}=
\raisebox{-.25cm}{
\begin{tikzpicture}
\draw [fill=lightgray] (.1,.4)--(.1,.5) arc[radius=.1, start angle=180, end angle=90]--(1.7,.6) arc[radius=.1, start angle=90, end angle=0]--(1.8,0) arc[radius=.1, start angle=360, end angle=270]--(.2,-.1) arc[radius=.1, start angle=270, end angle=180]--(.1,.1);
\draw [fill=white] (.4,.4) arc[radius=.1, start angle=180, end angle=90]--(.7,.5) arc[radius=.1, start angle=90, end angle=0]--(.8,.1) arc[radius=.1, start angle=360, end angle=270]--(.5,0) arc[radius=.1, start angle=270, end angle=180];
\draw [fill=white] (1,.4) arc[radius=.1, start angle=180, end angle=90]--(1.4,.5) arc[radius=.1, start angle=90, end angle=0]--(1.5,.1) arc[radius=.1, start angle=360, end angle=270]--(1.1,0) arc[radius=.1, start angle=270, end angle=180];
\draw [fill=white] (0,.1) rectangle (.5,.4);
\node at (.25,.25) {$x$};
\node at (-.04,.25) {\tiny $\$$};
\draw [fill=yellow] (.7,.1) rectangle (1.2,.4);
\node at (.66,.25) {\tiny $\$$};
\node at (.95,.25) {$y$};
\draw [fill=cyan] (1.4,.1) rectangle (1.9,.4);
\node at (1.94,.25) {\tiny $\$$};
\node at (1.65,.25) {$z$};
\end{tikzpicture}}\\
=&
\raisebox{-.5cm}{
\begin{tikzpicture}
\begin{scope}
\draw [fill=lightgray] (.1,.4)--(.1,.5) arc[radius=.1, start angle=180, end angle=90]--(1.7,.6) arc[radius=.1, start angle=90, end angle=0]--(1.8,.4);;
\draw [fill=white] (1,.4) arc[radius=.1, start angle=180, end angle=90]--(1.4,.5) arc[radius=.1, start angle=90, end angle=0];
\draw [fill=white] (.4,.4) arc[radius=.1, start angle=180, end angle=90]--(.7,.5) arc[radius=.1, start angle=90, end angle=0];
\draw [fill=white] (0,.05) rectangle (.5,.45);
\node at (.25,.25) {$x^{\frac{1}{2}}$};
\node at (-.04,.25) {\tiny $\$$};
\draw [fill=yellow] (.7,.05) rectangle (1.2,.45);
\node at (.66,.25) {\tiny $\$$};
\node at (.95,.25) {$y^{\frac{1}{2}}$};
\draw [fill=cyan] (1.4,.05) rectangle (1.9,.45);
\node at (1.94,.25) {\tiny $\$$};
\node at (1.65,.25) {$z^{\frac{1}{2}}$};
\end{scope}
\draw [fill=lightgray]  (.1,.05) rectangle (.4,-.5);
\draw [fill=lightgray]  (.8,.05) rectangle (1.1,-.5);
\draw [fill=lightgray]  (1.5,.05) rectangle (1.8,-.5);
\begin{scope}
\begin{scope}
\begin{scope}[yscale=-1]
\draw [fill=lightgray] (.1,.4)--(.1,.5) arc[radius=.1, start angle=180, end angle=90]--(1.7,.6) arc[radius=.1, start angle=90, end angle=0]--(1.8,.4);;
\draw [fill=white] (1,.4) arc[radius=.1, start angle=180, end angle=90]--(1.4,.5) arc[radius=.1, start angle=90, end angle=0];
\draw [fill=white] (.4,.4) arc[radius=.1, start angle=180, end angle=90]--(.7,.5) arc[radius=.1, start angle=90, end angle=0];
\draw [fill=white] (0,.05) rectangle (.5,.45);
\node at (.25,.25) {$x^{\frac{1}{2}}$};
\node at (-.04,.25) {\tiny $\$$};
\draw [fill=yellow] (.7,.05) rectangle (1.2,.45);
\node at (.66,.25) {\tiny $\$$};
\node at (.95,.25) {$y^{\frac{1}{2}}$};
\draw [fill=cyan] (1.4,.05) rectangle (1.9,.45);
\node at (1.94,.25) {\tiny $\$$};
\node at (1.65,.25) {$z^{\frac{1}{2}}$};
\end{scope}
\end{scope}
\end{scope}
\end{tikzpicture}}\;\geq\;0.\label{equ: Equivalent of Schur Product}
\end{align}
Here we infer positivity from the reflection positivity in \eqref{equ: relative positivity}.
This proof works on $\cB$ by switching the shading, since the dual of a subfactor is also a subfactor.
\end{proof}
% Moreover, by the picture identity for $\tr(\fF(u))$ above, we conclude that the Schur product property is equivalent to the fact that for any $x,y,z\geq 0$,
\vskip -.3cm
The Schur product property holds on both $\cA$ and $\cB$ for subfactors. On fusion rings, it holds on $\cA$ but not necessarily on $\cB$. We discuss this essential difference revealed by QFA in more detail in \S \ref{Sec: QFA on Fusion Category}.

\subsection{\bf Applications}
An important application of the Schur product theorem comes in the classification of \textit{abelian} subfactor planar algebras~\cite{Liu16}. One can regard this as % An irreducible subfactor planar algebra is called \textit{abelian} if it is generated by 2-boxes, and the 2-box and 3-box spaces are commutative, modulo the basic construction ideal.
% The classification of abelian, subfactor planar algebras can be regarded as 
the fundamental theorem for abelian subfactors, extending the fundamental theorem for finite abelian groups. This was the first 
%following result represents the 
classification that neither requires a bound on the Jones index, nor on the dimension. 
\begin{theorem}[Theorem 6.7~\cite{Liu16}] 
An irreducible subfactor planar algebra is abelian if and only if it is a free product of the simplest  (Temperley-Lieb-Jones) planar algebras, and finite abelian groups.
\end{theorem}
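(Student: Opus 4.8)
The statement is an equivalence, and I would treat the two directions separately, with almost all the difficulty in the ``only if'' part. For the \textbf{``if'' direction}, it suffices to check that each building block is abelian and that the free product preserves abelianness. A Temperley--Lieb--Jones planar algebra has $\dim\sP_{2,\pm}=2$, so $\cA=\sP_{2,+}$ is automatically commutative; for a finite abelian group $G$ the associated planar algebra has $\cA\cong\bC[G]$, commutative precisely because $G$ is abelian. One then verifies directly from the Bisch--Jones free-product construction that the $2$-box algebra of a free product decomposes, as an algebra, into contributions from the factors in a manner that keeps $\cA$ commutative; this is the routine part.

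For the \textbf{``only if'' direction}, assume $\sP_\bullet$ is irreducible and abelian, i.e. $\cA=\sP_{2,+}$ is a commutative finite-dimensional $C^{*}$-algebra. Then $\cA\cong\bC^{n}$, with minimal projections $q_{1},\dots,q_{n}$ labelling the simple summands of the generating bimodule. The first real step is to apply the Schur product theorem (Theorem~\ref{thm: Schur Product}): since each $q_{i}\geq 0$, every convolution $q_{i}*q_{j}$ is again positive, so expanding $q_{i}*q_{j}=\sum_{k}N_{ij}^{k}\,q_{k}$ yields nonnegative fusion coefficients $N_{ij}^{k}$. This equips the minimal projections with the structure of a fusion (hyper)ring, and it is exactly the positivity furnished by Theorem~\ref{thm: Schur Product} that prevents any ``defect'' in these rules.

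Next I would isolate the \emph{group-like} elements: the minimal projections of smallest trace $\delta^{-2}$, equivalently the invertible summands. Using the Schur product together with a trace/rank count, one shows that the convolution of two group-like projections is again a single group-like projection, so that this set forms a group $G$ under convolution; the commutativity hypothesis on $\cA$ forces $G$ to be abelian. The remaining minimal projections should then be seen to be generated, under the planar operations, by a single Temperley--Lieb generator together with the $q_i$ attached to $G$. In parallel, the \emph{biprojections} of $\sP_\bullet$ --- projections whose Fourier transform is again proportional to a projection --- correspond through Bisch's dictionary to the intermediate subfactors, and the Schur product realizes the lattice join of biprojections; in the abelian case this lattice matches the subgroup lattice of $G$.

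The \textbf{main obstacle} is the concluding reconstruction step: upgrading this skeletal match to an isomorphism of planar algebras $\sP_\bullet\cong \sP^{G}_\bullet * \mathrm{TLJ}_\bullet$. Two points are delicate. First, one must \emph{rule out exotic simple objects} --- minimal projections of $\cA$ that are neither group-like nor produced by the Temperley--Lieb generator --- and this is precisely where the positivity of the Schur product does the decisive work, by forbidding the extra low-weight intertwiners such objects would require. Second, one must show the higher box spaces $\sP_{n,\pm}$ carry no relations beyond those of the free product, so that the abstract data $(G,\mathrm{TL})$ reconstructs $\sP_\bullet$ on the nose. I expect this generation-and-rigidity argument to be the crux of the proof, with the decomposition, fusion-positivity, and lattice steps above serving to install exactly the hypotheses it consumes.
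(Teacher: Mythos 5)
This statement is quoted by the paper from Theorem 6.7 of \cite{Liu16} and is not proved in the present text, so your proposal has to stand on its own; it does not, for two reasons, one structural and one of completeness. The structural problem is that your target for the ``only if'' direction --- isolating a single group $G$ of group-like projections plus a single Temperley--Lieb generator and proving $\sP_\bullet\cong \sP^{G}_\bullet * \mathrm{TLJ}_\bullet$ --- is strictly narrower than what the theorem asserts. The conclusion of the theorem is an \emph{iterated} free product of possibly several TLJ factors and several finite abelian groups. For instance, the free product of two copies of the $\mathbb{Z}_3$ group planar algebra is abelian (its $2$-box space is $\bC^5$), but it cannot be written as $\sP^{G}_\bullet * \mathrm{TLJ}_\bullet$: matching dimensions forces $|G|=4$ and $\delta^2=9/4$, which is not an admissible Jones index. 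So any argument organized around ``one group plus one TL generator'' fails on the actual statement. The proof in \cite{Liu16} instead proceeds recursively on the lattice of biprojections: the Schur product theorem is used to produce a nontrivial biprojection (the convolution powers of a minimal projection have increasing range, which stabilizes at a biprojection); by Bisch's theorem \cite{Bisch} this biprojection yields an intermediate subfactor $\cN\subset\sP\subset\cM$; one then proves that this composition is \emph{free} in the Bisch--Jones sense (a dimension count on $2$-boxes, using commutativity), and inducts on both smaller planar algebras. Your sketch gestures at the biprojection lattice but never sets up this induction, which is what actually handles the iterated free products.

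The completeness problem is that even within your own outline, the step you label the ``main obstacle'' --- ruling out exotic minimal projections and showing the higher box spaces $\sP_{n,\pm}$ satisfy no relations beyond those of a free product --- is named but not argued; an outline that defers its crux to an expectation is not a proof. Two smaller gaps compound this: the Schur product theorem gives you \emph{nonnegative real} structure constants $N_{ij}^k\geq 0$ for convolution of minimal projections, not an integral fusion ring, so the ``fusion ring'' language needs justification or should be dropped; and the claim that the minimal-trace projections are closed under convolution and form a group requires the trace formula for convolution ($\tr$ is multiplicative under $*$ up to a power of $\delta$) together with an argument that the convolution of two such projections has one-dimensional range --- neither is supplied. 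The ``if'' direction, which you call routine, also needs the explicit description of the $2$-box algebra of a Bisch--Jones free composition (dimension $\dim\cA_1+\dim\cA_2-1$, with the two factors commuting past each other); this is true but is exactly the kind of statement your proof should verify rather than assert.
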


Also one obtains 
%In a different direction, this positivity leads to
a geometric proof of reflection positivity. 
\begin{theorem}
Reflection positivity holds for Hamiltonians on planar para algebras (Theorem {7.1} of~\cite{JL17}) and on Levin-Wen models (Theorem {3.2} of~\cite{JL19}).
\end{theorem}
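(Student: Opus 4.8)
The plan is to translate reflection positivity into the pictorial language and then invoke the reflection-positivity condition~\eqref{equ: relative positivity} directly, in the same spirit as the proof of Theorem~\ref{thm: Schur Product}. For a Hamiltonian $H=\sum_i h_i$ assembled from local terms on a planar para algebra, reflection positivity is the assertion that $\langle (\theta A)^*\, A\rangle\geq 0$, where $\theta$ is the reflection across a symmetry line and $A$ is any observable supported on one side of it. First I would write the relevant Gibbs functional pictorially: expand $e^{-\beta H}$ (or, in the lattice formulation, the product of local Boltzmann weights) as a single planar tangle with labelled input boxes, and observe that $\theta$ acts on these pictures exactly as the pictorial adjoint $^*$, namely reflection across the symmetry line.

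Next I would fold the diagram along the reflection line. The structural point is that, because $H$ is local and reflection-symmetric, the expanded tangle factors as a picture $P$ occupying one side glued to its mirror image on the other. The quantity in question then acquires precisely the stacked form of $P$ over $P^{*}$ that appears on the right-hand side of~\eqref{equ: relative positivity}, so positivity is immediate. This is the same geometric mechanism that produced positivity in the Schur product theorem above: reflection positivity of the planar algebra is promoted to reflection positivity of the physical model simply by realizing its partition function as a self-adjoint, reflection-symmetric tangle.

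The hard part is that for para algebras the reflection $\theta$ must be chosen to respect the underlying $\bZ_n$ parafermionic statistics: transporting a charge across the reflection line produces a phase from the braiding, and one must check that all such phases organize into a genuine $^*$-operation, so that the folded diagram really has the form $P$ glued to $\theta(P)=P^{*}$ with no anomalous sign surviving. Verifying this compatibility---that each local term $h_i$ is simultaneously reflection-invariant and positive in the para-algebra sense---is the technical core of Theorem~7.1 of~\cite{JL17}. For the Levin--Wen case (Theorem~3.2 of~\cite{JL19}) the analogue is that the Hamiltonian combines vertex and plaquette projectors, and one must show the resulting string-net partition function still decomposes across the reflection plane into conjugate halves; this follows once the Levin--Wen data is encoded in the planar algebra so that the fold is again along the line fixed by the pictorial adjoint.
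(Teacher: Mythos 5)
First, note the ground truth here: the paper does not prove this theorem at all. It is stated as an application and attributed verbatim to Theorem~7.1 of~\cite{JL17} and Theorem~3.2 of~\cite{JL19}. Your proposal, read closely, does the same thing: after setting up the folding picture you write that verifying the compatibility of the reflection with the parafermion phases ``is the technical core of Theorem~7.1 of~\cite{JL17},'' i.e.\ you defer exactly the step that constitutes the proof to the cited paper. So what you have is a plausible outline of what must be shown, not a proof; that is acceptable as a gloss on the citation, but it should not be presented as an argument.

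There is also a concrete mathematical gap in the outline itself. The pivotal claim---that the expansion of $e^{-\beta H}$ ``factors as a picture $P$ occupying one side glued to its mirror image''---is false as stated. The operator $e^{-\beta H}$ is a series (or Trotter limit) of products of local terms, and the terms of $H$ that straddle the reflection line do not split into a one-sided tangle times its reflection; this is precisely why reflection symmetry of $H$ alone does \emph{not} imply reflection positivity. The actual proofs require two further ingredients that your fold-and-cite picture omits: (i) hypotheses on the cross-plane couplings of $H$ (signs, and in the para-algebra case a twisted positivity condition compensating the $\bZ_n$ phases), without which the conclusion fails; and (ii) a lemma that the reflection-positive elements form a cone closed under the relevant products, sums, and limits, so that each term of the Trotter/power-series expansion of $e^{-\beta H}$ lies in that cone. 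It is in (ii), not in a single fold of the partition function, that the mechanism of \eqref{equ: relative positivity} and of the proof of Theorem~\ref{thm: Schur Product} genuinely enters. As it stands, your argument would ``prove'' reflection positivity for every reflection-symmetric local Hamiltonian, which is too strong.
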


\subsection{Quantum Hausdorff-Young Inequalities}
Many estimates for the norm $K_{p,q}=\norm{\mathfrak{F}}_{L^p\rightarrow L^q}$ have been established in the quantum case, including when $p,q$ are not dual, see~\cite{JLW16,LW19}. Instead of synthesizing these estimates into one theorem, we give a picture~(\ref{Pic:subfactor-bounds}) illustrating the known bounds for $K_{p,q}$, with the extremizers for various regions and boundaries in text. 
\begin{equation}\label{Pic:subfactor-bounds}
\scalebox{.85}{
\raisebox{-3cm}{\begin{tikzpicture}[scale=2.25]
\draw [line width=2] [->](0,0)--(0,3);
\node at (-0.1,-0.1) {0};
\node [left] at (-.05, 3) {$\frac{1}{q}$};
\draw [line width=2]  [->](0,0)--(3.5,0);
%\node [below] at (1,0) {$\frac{1}{2}$};
\node [below] at (2, 0) {1};
%\node at (2,0) {$\bullet$};
\node [below] at (3.5,0) {$\frac{1}{p}$};
\draw [blue, line width=2] (1,1)--(0,1);
\node [rotate=-45, color=orange] at (1.6,0.55) {\tiny Bishifts of biprojections};
\node [left] at (0,1) {$\frac{1}{2}$};
\node at (0,1) {$\bullet$};
\node [left] at (0, 2) {1};
\node at (0,2) {$\bullet$};
\draw [red, line width=2] (1,1)--(2,0);
\draw [blue, line width=2] (1,1)--(1,3);
\node at (2,1.2) {\tiny Trace-one projections};
\node [blue] at (2, 1.4) {${\scriptstyle K_{p,q}=\delta^{-1+\frac{2}{q}}}$};
\node at (1, 1) {\large $\bullet$};
\node [color=blue,rotate=90] at (1.1, 2.1) { \tiny Fourier transform of unitaries};
\node [blue] at (0.5,1.4) {$\scriptstyle K_{p,q}=\delta^{\frac{2}{q}-\frac{2}{p}}$};
\node [blue] at (0.5, 1.1) {\tiny Unitaries};
\node [blue] at (.5, 0.7) {$\scriptstyle K_{p,q}=\delta^{1-\frac{2}{p}}$};
\node (example-align) [align=center] at (0.8, 0.4) {$\text{\tiny Fourier transform of }\atop
\text{\tiny trace-one projections}$};
\node [color=black] at (0.5, 2) {\tiny Biunitaries};
\node at (2,0) {\large $\bullet$};
\node [rotate=-30] at (2.6,-.35) {\tiny Extremal elements};
\node [below] at (1,0) {\tiny Extremal unitary elements};
\end{tikzpicture}}}
\end{equation}
The constant $\delta$ is the square root of the Jones index. The extremizers of these inequalities have nine different characterizations. In particular, the red line $1/p+1/q=1$,  for $1/2\leq1/p\leq1$ corresponds to the quantum Hausdorff-Young inequality. 
% Its extremizers are bishifts of biprojections, which play the role of Gaussians in the classical case. 
Moreover, all the other quantum inequalities, such as quantum Young's inequality, in Theorem \ref{Thm:Fusion} have been proved for subfactors planar algebras in \cite{JLW16}.

\section{QFA on Unitary Fusion Categories}\label{Sec: QFA on Fusion Category}.
The quantum Fourier analysis on subfactors also works for unitary fusion categories through the quantum double construction, see e.g. \cite{Muger03}. Let $\cC$ be a unitary fusion category and $I=\{X_1,X_2,\cdots,X_m\}$ be the set of simple objects. There is a Frobenius algebra $\gamma$ in $\cC\boxtimes\overline{\cC}$ whose object is  $\displaystyle\bigoplus_{i=1}^m X_i\boxtimes \overline{X_i}$.
Following the quantum double construction, we obtain an irreducible subfactor planar algebra, such that $\cA=\mathscr{P}_{2,+}=\hom_{\cC\boxtimes\overline{\cC}}(\gamma)$ and $\cB=\mathscr{P}_{2,-}=\hom_{\gamma-\gamma}(\gamma\otimes \gamma)$. 
 Applying quantum Fourier analysis to $\cA$ on this subfactor, we obtain inequalities on the Grothendieck ring of unitary fusion categories as stated in Theorem \ref{Thm:Fusion}. Applying quantum Fourier analysis to $\cB$, we obtain inequalities on the dual of the Grothendieck ring, which turn out to be highly non-trivial.

\subsubsection{\bf Application: Analytic Obstructions}\label{Sect:ApplicationAnalytic}
It is important to determine whether a fusion ring can be the Grothendieck ring of a unitary fusion category. Quantum Fourier analysis provides powerful analytic obstructions to the unitary categorification of fusion rings. The quantum inequalities in Theorem \ref{Thm:Fusion} holds on the dual of Grothendieck rings. However, they may not necessarily hold on the dual of fusion rings, thereby providing analytic obstructions of the unitary categorification of fusion rings. 

The Schur product property on the dual of a fusion ring, $0\leq x*y$ if $0\leq x,y\in \cB$, is a surprisingly efficient analytic obstruction of the unitary categorification: 
\begin{theorem}[\cite{LPW19}]\label{thm: Schur Obs}
If a fusion ring can be unitarily categorified, then the Schur product property holds on the dual. 
\end{theorem}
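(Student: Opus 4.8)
The plan is to deduce Theorem \ref{thm: Schur Obs} directly from the Schur product theorem on subfactors (Theorem \ref{thm: Schur Product}) via the quantum double construction described in \S\ref{Sec: QFA on Fusion Category}. The key observation is that if a fusion ring $\fA$ is the Grothendieck ring of a unitary fusion category $\cC$, then the quantum double construction produces an irreducible subfactor planar algebra in which $\cB=\sP_{2,-}$ \emph{is exactly} the dual algebra of the fusion ring, with the convolution product $*$ matching the dual multiplication. Since Theorem \ref{thm: Schur Product} holds on $\cB$ for any subfactor planar algebra, the Schur product property is automatically inherited by the dual of $\fA$.

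First I would make precise the dictionary between the algebraic and pictorial pictures. Given the unitary fusion category $\cC$ with simple objects $I=\{X_1,\dots,X_m\}$, form the Frobenius algebra $\gamma=\bigoplus_i X_i\boxtimes\overline{X_i}$ in $\cC\boxtimes\overline{\cC}$ and run the quantum double construction to obtain a subfactor planar algebra $\sP_\bullet$ with $\cA=\hom_{\cC\boxtimes\overline{\cC}}(\gamma)$ and $\cB=\hom_{\gamma-\gamma}(\gamma\otimes\gamma)$. I would then verify that, under this identification, the basis of $\cB$ is indexed by $I$ (the minimal central projections, one per simple object), and that the convolution product $*$ on $\cB$ restricted to this basis reproduces precisely the fusion coefficients $N_{j,k}^s$ of $\fA$. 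In other words, I would exhibit a $*$-algebra isomorphism between $(\cB,*)$ and the dual algebra of the fusion ring $\fA$ that carries positive elements to positive elements.

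Second, with this identification in hand, I would simply invoke Theorem \ref{thm: Schur Product}: for any $0\leq x,y\in\cB$ one has $0\leq x*y$. Transporting this statement across the isomorphism yields exactly the assertion that the Schur product property holds on the dual of $\fA$ whenever $\fA$ admits a unitary categorification.

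The main obstacle is the second step of the dictionary---showing that the convolution $*$ on $\cB$, defined pictorially, really does encode the fusion multiplication of $\fA$ on the nose, including matching the correct normalizations (the Perron-Frobenius weights $d(\cdot)$ entering the traces $d$ and $\tau$) so that positivity and the trace pairing are compatible on both sides. This requires a careful comparison of the planar-algebraic trace on $\cB$ with the functional used to define $L^p$ norms on the dual of $\fA$, and a check that the range projections and positive cones correspond. Once the normalizations are reconciled, the positivity conclusion is immediate; the content of the theorem is entirely in the faithful translation between the categorical data and the subfactor $\cB$, after which Theorem \ref{thm: Schur Product} does the work.
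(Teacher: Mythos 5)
Your overall architecture is the one the paper (following \cite{LPW19}) intends: run the quantum double construction on a categorification of $\fA$, identify the two $2$-box spaces of the resulting subfactor planar algebra with the fusion bialgebra of $\fA$, and transport Theorem \ref{thm: Schur Product}, which holds on $\sP_{2,-}$ by switching shading, across that identification. So the route and the final invocation are the right ones.

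However, your dictionary is garbled at exactly the point where the content of the theorem lies, and the verification you propose would fail as stated. The minimal projections indexed by the simple objects live in the \emph{abelian} algebra $\cA=\sP_{2,+}=\hom_{\cC\boxtimes\overline{\cC}}(\gamma)$, not in $\cB=\sP_{2,-}$; and it is the convolution on $\sP_{2,+}$ --- equivalently, under $\fF$, the ordinary composition product on $\sP_{2,-}$ --- whose structure constants are (proportional to) the fusion coefficients $N_{j,k}^s$. The convolution on $\sP_{2,-}$ corresponds instead to the pointwise product $\diamond$ of \S\ref{subsec:fusion rings}. This is not a pedantic distinction, because the two pairings of ``product'' with ``positive cone'' give statements of wildly different strength. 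The nontrivial statement (the Schur product property on the dual) pairs operator positivity with respect to the \emph{composition} product on $\sP_{2,-}$ (i.e.\ positivity in the fusion algebra $\cB$, which in the group case means positive-definite functions) with the $\diamond$-type \emph{convolution} on $\sP_{2,-}$; that is what Theorem \ref{thm: Schur Product} delivers. In your dictionary, the algebra is spanned by commuting minimal projections, so operator positivity degenerates to coefficientwise positivity, and since $N_{j,k}^s\geq 0$ the positivity of convolutions of positives is automatic: you would have reproved $[\text{QSP}]$ of Theorem \ref{Thm:Fusion} on $\cA$, which holds for \emph{every} fusion ring and hence yields no obstruction whatsoever (the example in \S\ref{Sect:ApplicationAnalytic} would not be ruled out by it). The repair is to prove: (i) $x_j\mapsto p_j$, suitably normalized by Perron--Frobenius dimensions, is a $*$-isomorphism from $(\cA_{\fA},\diamond)$ onto $(\sP_{2,+},\cdot\,)$ that intertwines the algebraic and the pictorial Fourier transforms; (ii) consequently the induced map $(\cB_{\fA},\cdot\,)\to(\sP_{2,-},\cdot\,)$ is a $*$-isomorphism of $C^*$-algebras (so the positive cones match) which carries the fusion-ring convolution to the pictorial convolution; then Theorem \ref{thm: Schur Product} applied on the $\sP_{2,-}$ side finishes the proof.
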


%The argument that yields  \eqref{equ: Equivalent of Schur Product} for subfactors,  gives the following property for any  fusion ring $\cA$ that can be unitarily categorified: For all  $x,y,z\in\cB$, 
%\begin{equation}\label{eqn:NeedFusion}
%\begin{aligned}
%d((\widehat {xx^*})\diamond (\widehat {yy^*})\diamond (\widehat {zz^*}))\geq 0.
%\end{aligned}
%\end{equation}

There are 21 exmaples in the classification of simple integral fusion rings up to rank 8 and global dimension 989. Four of them are group-like. Methods based on previously known analytic, algebraic and number theoretic obstructions did not determine whether the remaining 17 could be unitarily categorified. Due to Schur product obstruction, 15 out of the 17 have no unitary categorification, as shown in \cite{LPW19}.

\noindent {\bf Example:}
Let us recall one example from  \cite{LPW19} to illustrate this obstruction.  Let $\fA$ be the rank-7 simple integral fusion ring with the following seven  fusion matrices, 
%with entries   $(M_j)_{ik}=N_{j,i}^k$, and $M_j$ 
equal to  
\[ 
%\small
\scalebox{.85}{$
\left(\begin{smallmatrix}
1 & 0 & 0 & 0& 0& 0& 0 \\
0 & 1 & 0 & 0& 0& 0& 0 \\
0 & 0 & 1 & 0& 0& 0& 0 \\
0 & 0 & 0 & 1& 0& 0& 0 \\
0 & 0 & 0 & 0& 1& 0& 0 \\
0 & 0 & 0 & 0& 0& 1& 0 \\
0 & 0 & 0 & 0& 0& 0& 1 
\end{smallmatrix}\right)\;,
\
\left(
\begin{smallmatrix}
0 & 1 & 0 & 0& 0& 0& 0 \\
1 & 1 & 0 & 1& 0& 1& 1 \\
0 & 0 & 1 & 0& 1& 1& 1 \\
0 & 1 & 0 & 0& 1& 1& 1 \\
0 & 0 & 1 & 1& 1& 1& 1 \\
0 & 1 & 1 & 1& 1& 1& 1 \\
0 & 1 & 1 & 1& 1& 1& 1 
\end{smallmatrix}\right) \;,
\
\left(\begin{smallmatrix}
0 & 0 & 1 & 0& 0& 0& 0 \\
0 & 0 & 1 & 0& 1& 1& 1 \\
1 & 1 & 1 & 0& 0& 1& 1 \\
0 & 0 & 0 & 1& 1& 1& 1 \\
0 & 1 & 0 & 1& 1& 1& 1 \\
0 & 1 & 1 & 1& 1& 1& 1 \\
0 & 1 & 1 & 1& 1& 1& 1 
\end{smallmatrix}\right) \;,
\
\left(\begin{smallmatrix}
0 & 0 & 0 & 1& 0& 0& 0 \\
0 & 1 & 0 & 0& 1& 1& 1 \\
0 & 0 & 0 & 1& 1& 1& 1 \\
1 & 0 & 1 & 1& 0& 1& 1 \\
0 & 1 & 1 & 0& 1& 1& 1 \\
0 & 1 & 1 & 1& 1& 1& 1 \\
0 & 1 & 1 & 1& 1& 1& 1 
\end{smallmatrix}\right)\;,
$}
\]
\[
\scalebox{.85}{$
\left(
\begin{smallmatrix}
0 & 0 & 0 & 0& 1& 0& 0 \\
0 & 0 & 1 & 1& 1& 1& 1 \\
0 & 1 & 0 & 1& 1& 1& 1 \\
0 & 1 & 1 & 0& 1& 1& 1 \\
1 & 1 & 1 & 1& 1& 1& 1 \\
0 & 1 & 1 & 1& 1& 2& 1 \\
0 & 1 & 1 & 1& 1& 1& 2 
\end{smallmatrix} \right) \;,
\
\left(
\begin{smallmatrix}
0 & 0 & 0 & 0& 0& 1& 0 \\
0 & 1 & 1 & 1& 1& 1& 1 \\
0 & 1 & 1 & 1& 1& 1& 1 \\
0 & 1 & 1 & 1& 1& 1& 1 \\
0 & 1 & 1 & 1& 1& 2& 1 \\
1 & 1 & 1 & 1& 2& 0& 3 \\
0 & 1 & 1 & 1& 1& 3& 1 
\end{smallmatrix} \right) \;,
\
\left(
\begin{smallmatrix}
0 & 0 & 0 & 0& 0& 0& 1 \\
0 & 1 & 1 & 1& 1& 1& 1 \\
0 & 1 & 1 & 1& 1& 1& 1 \\
0 & 1 & 1 & 1& 1& 1& 1 \\
0 & 1 & 1 & 1& 1& 1& 2 \\
0 & 1 & 1 & 1& 1& 3& 1 \\
1 & 1 & 1 & 1& 2& 1& 2 
\end{smallmatrix}\right)\;.
$}
\]
%The fusion ring is commutative, and $M_2$ is irreducible, so the Perron-Frobenius eigenvector $(1,5,5,5,6,7,7)$ for $M_{2}$ is an eigenvector for the largest eigenvalue of each matrix.  
The eigenvalue table of these matrices (where $\zeta^{7}=1$) is:  
\begin{equation}\label{Characters}   
\scalebox{.9}{$\begin{pmatrix}  
1 & 1 & 1 & 1 & 1 & 1 & 1  \\
5 & -1 & -\zeta -\zeta^6 & -\zeta^5 - \zeta^2 & -\zeta^4 - \zeta^3 & 0 & 0 \\ 
5 & -1 & -\zeta^5 - \zeta^2 & -\zeta^4 - \zeta^3 & -\zeta -\zeta^6 & 0 & 0  \\ 
5 & -1 & -\zeta^4 - \zeta^3 & -\zeta -\zeta^6 & -\zeta^5 - \zeta^2 & 0 & 0  \\ 
6 & 0 & -1 & -1 & -1 & 1 & 1  \\
7 & 1 & 0 & 0 & 0 & 0 & -3  \\
7 & 1 & 0 & 0 & 0 & -1 & 2
\end{pmatrix}$}\;.
\end{equation}
The first column is the Perron-Frobenius dimension of the 7 simple objects.
Take $X=x_1+x_5-3x_6+2x_7$, then $X=X^*=X^2/15$.
%Let $\vec{\alpha}$ be a $7$-vector with components  $\alpha_k$ and $X=\sum_{k}\alpha_kx_k$ be an arbitrary element in $\cA$:
%\begin{align*}
%XX^*&=\sum_{k,j}\alpha_k\overline{\alpha_j} x_kx_{j^*}
%=\sum_{k,j,i}\alpha_k\overline{\alpha_j}N_{k,j^*}^i x_i\\
%&=\sum_{k,j,i}\alpha_k\overline{\alpha_j}N_{i,j}^k x_i
%=\sum_{i} \vec{\alpha}^*M_i\vec{\alpha}     x_i\geq0\;.
%\end{align*}
%In particular,  set $\vec{\alpha}$ to be the normalized eigenvector for the eigenvalues in the last column of the matrix \eqref{Characters}, so $ \vec{\alpha}^*M_i\vec{\alpha}=\alpha_{i}$, and  $x_1+x_5-3x_6+2x_7=XX^{*}\geq0$.

The Schur product property on $\cB$, equivalent to a dual version of \eqref{equ: Equivalent of Schur Product}, yields (with $x=y=z=X$) that
\begin{equation}\label{Obstruction}
 d((\widehat{XX^*})\diamond (\widehat{XX^*})\diamond (\widehat{XX^*}))\geq 0.
\end{equation}
However, it follows directly that \eqref{Obstruction} is false in this case, as 
$$ \frac{1^3}{1} + \frac{0^3}{5} + \frac{0^3}{5} + \frac{0^3}{5} + \frac{1^3}{6} + \frac{(-3)^3}{7} + \frac{2^3}{7} =  -\frac{65}{42}<0.$$
Therefore, the fusion ring $\fA$ can not be unitarily categorified. 

\section{QFA on Locally Compact Quantum Groups}\label{sec: QFA on LCQG}
The previous results focus on finite quantum symmetry, such as fusion rings and finite-index subfactors. One might ask whether quantum Fourier analysis can be established for infinite quantum symmetry. The answer is ``yes;'' there are results on infinite-dimensional Kac algebras and locally compact quantum groups.

We recall the definition of the Fourier transform on locally compact quantum groups, of which the Fourier transform on Kac algebra is a special case, see \cite{KuVaes}.
Let $\mathbb{G}$ be a locally compact quantum group and $\varphi$ the left Haar weight.
Suppose $W$ is the multiplicative unitary,  $\phi$ is a normal semi-finite faithful weight on the commutant of $L^\infty(\mathbb{G})$, $\hat{\phi}$ is a normal semi-finite faithful weight on the commutant of $L^\infty(\widehat{\mathbb{G}})$.
Let $d=\frac{d\varphi}{d\phi}$, $\hat{d}=\frac{d\hat{\varphi}}{d\hat{\phi}}$ be the Connes' spatial derivatives,  $L^p(\phi), L^p(\hat{\phi})$ Hilsum's space for any $1\leq p\leq\infty$.
The Fourier transform $\mathcal{F}_p: L^p(\phi)\to L^q(\hat{\phi})$, for $1\leq p\leq 2$, and $1/p+1/q=1$, is defined by
$$\mathcal{F}_p(xd^{1/p})=(\varphi\otimes\iota)(W(x\otimes 1))\hat{d}^{1/q}, \forall x\in \mathcal{T}_\varphi^2\;.$$
Here $\mathcal{T}_\varphi\subset \mathfrak{N}_\varphi \cap \mathfrak{N}_\varphi^*$ is the space of elements  analytic  with respect to $\varphi$.
Even the definition of the convolution on locally compact quantum groups is non-trivial.

The quantum inequalities in Theorem \ref{Thm:Fusion} on these infinite quantum symmetries have been partially studied in \cite{Coo10, Cas13, LW17,LWW, JLW18}. The quantum uncertainty principle $QUP$-$2$ in Theorem \ref{Thm:Fusion} becomes a continuous family of inequalities on locally compact quantum groups \cite{JLW18}.

\section{Surface Algebras and A Universal Inequality}
\subsection{Surface Algebras} 
Many inequalities in classical Fourier analysis have not been axiomatized in a pictorial framework. The third author introduced surface algebras in \cite{Liu19}, formalizing the extension of planar algebras from 2D to 3D space, outlined in~\cite{LWJ17}. Surface algebras are an extensive framework to capture additional pictorial features of Fourier analysis.

For any subfactor planar algebra, the actions of planar tangles can be further extended to the actions of surface tangles.  
(The arrow in planar diagrams corresponds to the $\$$ sign in planar algebras. 
The clockwise/anticlockwise orientation of the arrow indicates the input/output disc in surface algebras.)
One can represent Fourier transform,  multiplication and convolution on
% $\mathscr{P}_{2,\pm}$ 
as the action of the following surface tangles in the 3D space:
\begin{equation}
\raisebox{-.5cm}{
	\begin{tikzpicture}
	\begin{scope}[scale=.5]
	
	\fill[thick,blue!20] (0,0+1)--(1,0+1)--(1.5,.5+1)--(.5,.5+1)--(0,0+1);
	\draw[thick,blue] (0,0+1)--(1,0+1)--(1.5,.5+1)--(.5,.5+1)--(0,0+1);
	\draw[->,blue] (.5,1.5)--(.25,1.25);
	
	\fill[thick,blue!20] (0,0)--(1,0)--(1.5,.5)--(.5,.5)--(0,0);
	\draw[thick,blue] (0,0)--(1,0)--(1.5,.5);
	\draw[thick,blue,dashed] (1.5,.5)--(.5,.5)--(0,0);
	\draw[->,blue] (0,0)--(.5,0);
	
	\draw (0,0)--++(0,1);
	\draw (1,0)--++(0,1);
	\draw[dashed] (.5,.5)--++(0,1);
	\draw (1.5,.5)--++(0,1);
	
	\end{scope}
	\end{tikzpicture}
}
\; ,
\raisebox{-.5cm}{
	\begin{tikzpicture}
	\begin{scope}[scale=.5]
	
	\begin{scope}[shift={(0,0)}]
	\fill[thick,blue!20] (0,0)--(1,0)--(1.5,.5)--(.5,.5)--(0,0);
	\draw[thick,blue] (0,0)--(1,0)--(1.5,.5)--(.5,.5)--(0,0);
	\draw[->,blue] (.5,.5)--(.25,.25);
	\end{scope}

	\begin{scope}[shift={(1,1)}]
	\fill[thick,blue!20] (0,0)--(1,0)--(1.5,.5)--(.5,.5)--(0,0);
	\draw[thick,blue] (0,0)--(1,0)--(1.5,.5)--(.5,.5)--(0,0);
	\draw[->,blue] (.5,.5)--(.25,.25);
	\end{scope}
	
	\begin{scope}[shift={(.5,-1)}]
	\fill[thick,blue!20] (0,0)--(1,0)--(1.5,.5)--(.5,.5)--(0,0);
	\draw[thick,blue] (0,0)--(1,0)--(1.5,.5);
	\draw[thick,blue,dashed] (1.5,.5)--(.5,.5)--(0,0);
	\draw[->,blue] (.5,.5)--(.25,.25);
	\end{scope}
	
	\draw (0,0)--++(.5,-1);
	\draw (1,0)--++(.5,-1);
	\draw[dashed] (1,-.5)--(1.5,1.5);
	\draw (2,-.5)--(2.5,1.5);
	\draw (.5,.5) to [bend right=30] (1,1);
	\draw (1.5,.5) to [bend right=30] (2,1);

	\end{scope}
	\end{tikzpicture}
}
\; ,
\raisebox{-.5cm}{
	\begin{tikzpicture}
	\begin{scope}[scale=.5]
	
	\begin{scope}[shift={(0,1.5)}]
	\fill[thick,blue!20] (0,0)--(1,0)--(1.5,.5)--(.5,.5)--(0,0);
	\draw[thick,blue] (0,0)--(1,0)--(1.5,.5)--(.5,.5)--(0,0);
	\draw[->,blue] (.5,.5)--(.25,.25);
	\end{scope}

	\begin{scope}[shift={(2,1.5)}]
	\fill[thick,blue!20] (0,0)--(1,0)--(1.5,.5)--(.5,.5)--(0,0);
	\draw[thick,blue] (0,0)--(1,0)--(1.5,.5)--(.5,.5)--(0,0);
	\draw[->,blue] (.5,.5)--(.25,.25);
	\end{scope}
	
	\begin{scope}[shift={(1,0)}]
	\fill[thick,blue!20] (0,0)--(1,0)--(1.5,.5)--(.5,.5)--(0,0);
	\draw[thick,blue] (0,0)--(1,0)--(1.5,.5);
	\draw[thick,blue,dashed] (1.5,.5)--(.5,.5)--(0,0);
	\draw[->,blue] (.5,.5)--(.25,.25);
	\end{scope}
	
	\draw (1,0)--++(-1,1.5);
	\draw (2,0)--++(1,1.5);
	\draw[dashed] (1.5,.5)--++(-1,1.5);
	\draw (2.5,.5)--++(1,1.5);
	
	\draw (1,1.5) arc (-180:0:.5 and .25);
	\draw (1.5,2) arc (-180:0:.5 and .25);
	
	\end{scope}
	\end{tikzpicture}
}
\; .
\end{equation}
% Using 3D pictures, one can consider the Fourier duality for surface tangles with multiple inputs and outputs, see applications in \cite{Liu19}.
% In particular, if the subfactor has depth two, equivalently $\cA$ and $\cB$ are Kac algebras dual to each other, then the co-multiplication of the Kac algebra is given by the following surface tangle: 
Using 3D pictures, one can consider the Fourier duality for surface tangles with multiple inputs and outputs, see applications in \cite{Liu19}.

% A (finite-dimensional) complex Kac algebra $K$ is a Hopf algebra $(H,\nabla,\eta,\Delta,\epsilon,S)$, such that the co-multiplication $\Delta$ is a *-homomorphism w.r.t. the multiplication $\nabla$. 
% Its dual Hopf algebra $K^*$ is also a Kac algebra. 
% The Kac algebra $K$ admits a tracial state $\phi$ as a Haar measure.
% The multiplicative unitary $W$ is defined as 
% \[
% W^*(x\otimes y)=(x\otimes y)(\Delta(y)(x\otimes 1)), ~\forall~ x,y \in K,
% \]
% w.r.t. the state $\phi\otimes \phi$.
% The Fourier transform $\fF: K \to K^*$ is defined as
% \[
% \fF(x)=(\phi\otimes \iota)(W(x\otimes 1)),
% \]
% where $\iota$ is the identity map on $K$.

One can consider 
% the
a finite-dimensional 
Kac algebra $K$ as $\cA$ and its dual $K^*$ as $\cB$,
with a Fourier transform from $K$ to $K^*$ defined analogously to \S \ref{sec: QFA on LCQG}
The pair of Kac algebras $K$ and $K^*$ can be understood as $\cA$ and $\cB$ for the surface algegra. The co-multiplication 
%of the Kac algebra 
is given by the following surface tangle: 
\begin{center}
	\raisebox{-.5cm}{
		\begin{tikzpicture}
		\begin{scope}[scale=.5]
		
		\begin{scope}[shift={(0,-1.5)}]
		\fill[thick,blue!20] (0,0)--(1,0)--(1.5,.5)--(.5,.5)--(0,0);
		\draw[thick,blue] (0,0)--(1,0)--(1.5,.5);
		\draw[thick,blue,dashed] (1.5,.5)--(.5,.5)--(0,0);
		\draw[->,blue] (.5,.5)--(.25,.25);
		\end{scope}

		\begin{scope}[shift={(2,-1.5)}]
		\fill[thick,blue!20] (0,0)--(1,0)--(1.5,.5)--(.5,.5)--(0,0);
		\draw[thick,blue] (0,0)--(1,0)--(1.5,.5);
		\draw[thick,blue,dashed] (1.5,.5)--(.5,.5)--(0,0);
		\draw[->,blue] (.5,.5)--(.25,.25);
		\end{scope}
		
		\begin{scope}[shift={(1,0)}]
		\fill[thick,blue!20] (0,0)--(1,0)--(1.5,.5)--(.5,.5)--(0,0);
		\draw[thick,blue] (0,0)--(1,0)--(1.5,.5)--(.5,.5)--(0,0);
		\draw[->,blue] (.5,.5)--(.25,.25);
		\end{scope}
		
		\draw (1,0)--++(-1,-1.5);
		\draw (2,0)--++(1,-1.5);
		\draw[dashed] (1.5,.5)--++(-1,-1.5);
		\draw (2.5,.5)--++(1,-1.5);
		
		\draw (1,-1.5) arc (180:0:.5 and .25);
		\draw[dashed] (1.5,-1) arc (180:0:.5 and .25);
		
		\end{scope}
		\end{tikzpicture}
	}.
\end{center}
The Hopf-axiom that the co-multiplication is an algebraic homomorphism reduces to the \emph{string-genus relation} of surface tangles~\cite{LWJ17}.

\subsection{A Universal Inequality} In a subfactor planar/surface algebra $\sP$, the Fourier transform, the multiplication, and the convolution can be realized by planar/surface tangles. 
In general, a surface tangle is a multi-linear map on $\bigoplus_{n\in \mathbb{N}} \sP_{n,\pm}$. Now we give a new pictorial inequality in the quantum case, motivated by the classical Brascamp-Lieb inequality. We replace the dual of the linear map $B_j: \mathbb{R}^n\to \mathbb{R}^{k_j}$ by a surface tangle $T_j$ with $k_j$ input discs and $n$ output discs, moreover the $n$-output discs are identical for different $j$: 
\begin{equation}\label{eq:top-braslieb}
\left\| \prod_{j=1}^m T_j (x_j) \right\|_1\leq C \prod_{j=1}^m \|x_j\|_{p_j}\;,
%\quad\text{where}
%\quad
%\sum_{j=1}^{m} \frac{1}{p_{j}} =1\;,
\end{equation}
and $C$ is the best constant.

This topological inequality includes the quantum Hausdorff-Young inequality, quantum H\"{o}lder inequality and quantum version of Young's inequality. The best constants of these three inequalities are achieved at biprojections. 

For those familiar with the Quon language \cite{LWJ17}, we can consider the new pictorial inequalities whose $T_j$'s are surface tangles with braided charged strings. In particular, if all the inputs and outputs are 2-boxes, corresponding to qudits, then $T_j$ can be any Clifford transformation on qudits. These Clifford transformations can be considered as a quantum analogue of the dual of a linear transformation $B_j: (\mathbb{Z}_d)^n \to (\mathbb{Z}_d)^{k_j}$. Considering the action on density matrices, the $n$-qudit Clifford gates on Pauli matrices are symplectic transformations on $2n$-dimensional symplectic spaces over $\mathbb{Z}_d$.

\section{Relative Inequalities, Entropy, and  Uncertainty}
Here we present a relative, quantum, Hausdorff-Young inequality. This  leads to  a new,  relative, quantum, entropic uncertainty principle. 

Let $\mathscr{P}_\bullet$ be an irreducible subfactor planar algebra with the Markov trace $\tr$. Let $\varphi$ (resp. $\psi$) be a faithful state on $\sP_{2,+}$ (resp. $\sP_{2,-}$). Let $D_\varphi$ (resp. $D_{\psi}$) be the density operator of $\varphi $ (resp. $\psi$), namely, $\varphi(\cdot)=\tr(D_{\varphi}\,\cdot)$.  Note that 
\[
\raisebox{-.35cm}{
	\begin{tikzpicture}
	\path [fill=lightgray](.1,.7) rectangle (.6,-.2);
	\draw (.1,.7)--(.1,-.2);
	\draw (.6,.7)--(.6,-.2);
	\draw [fill=pink] (-.1,0) rectangle (.8,.5);
	\node at (-.2+.003,.25) {\tiny $\$$};
	\node at (.35,.25) {\tiny$xD_\varphi^{\frac{1}{p}}$};
	\end{tikzpicture}
}=
\raisebox{-.7cm}{
	$\begin{tikzpicture}
	\path [fill=lightgray] (.1,.5) rectangle (.4,2.1);
	\draw (.1,.5)--(.1,2.1);
	\draw (.4,.5)--(.4,2.1);
	\draw[fill=green] (0,.7) rectangle (.6,1.2);
	\draw [fill=yellow](-.2,1.35) rectangle (.7,1.95);
	\node at (.25,1.65) {\tiny $D_\varphi^{\frac{1}{p}-\frac{1}{2}}$};
	\node at (.3,.95) {\tiny $x D_\varphi^{\frac{1}{2}}$};
	\node at (-.1+.003,.95) {\tiny $\$$};
	\node at (-.25+.003,1.65) {\tiny $\$$};
	\end{tikzpicture}$}.
\]
Now we define a Fourier transform $\fF_{p,\varphi,{\psi}}:L^p(\sP_{2,+},\tr)\rightarrow L^{q}(\sP_{2,-},\tr)$ for $1\leq p\leq 2$, $q=p/(p-1)$ as
\begin{equation}
\fF_{p,\varphi,\psi}(xD_\varphi^{1/p})=\fF(xD_\varphi^{1/2})D_{\psi}^{1/q-1/2}\;.
\end{equation}
This Fourier transform is represented pictorially as follows,
\begin{equation}
\fF_{p,\varphi,\psi}:
%\raisebox{-.35cm}{
%	\begin{tikzpicture}
%	\path [fill=lightgray](.1,.7) rectangle (.6,-.2);
%	\draw (.1,.7)--(.1,-.2);
%	\draw (.6,.7)--(.6,-.2);
%	\draw [fill=pink] (-.1,0) rectangle (.8,.5);
%	\node at (-.2+.003,.25) {\tiny $\$$};
%	\node at (.35,.25) {\tiny$xD_\varphi^{\frac{1}{p}}$};
%	\end{tikzpicture}
%}=
\raisebox{-.7cm}{
	$\begin{tikzpicture}
	\path [fill=lightgray] (.1,.5) rectangle (.4,2.1);
	\draw (.1,.5)--(.1,2.1);
	\draw (.4,.5)--(.4,2.1);
	\draw[fill=green] (0,.7) rectangle (.6,1.2);
	\draw [fill=yellow](-.2,1.35) rectangle (.7,1.95);
	\node at (.25,1.65) {\tiny $D_\varphi^{\frac{1}{p}-\frac{1}{2}}$};
	\node at (.3,.95) {\tiny $x D_\varphi^{\frac{1}{2}}$};
	\node at (-.1+.003,.95) {\tiny $\$$};
	\node at (-.25+.003,1.65) {\tiny $\$$};
	\end{tikzpicture}$}
\quad\longmapsto\quad
\raisebox{-.8cm}{
$\begin{tikzpicture}
\path [fill=lightgray] (-.6,.4) rectangle (.9,2.2);
\draw [fill=white] (.1,2.2)--(.1,.7) arc[radius=.15, start angle=360, end angle=180]--(-.2,2.2);
\draw [fill=white] (.4,.4)--(.4,1.2) arc[radius=.15, start angle=180, end angle=0]--(.7,.4);
\draw[fill=green] (0,.7) rectangle (.6,1.2);
\draw [fill=cyan](-.2-.25,1.4) rectangle (.7-.25,2);
\node at (0,1.7) {\tiny $D_\psi^{\frac{1}{q}-\frac{1}{2}}$};
\node at (.3,.95) {\tiny $xD_\varphi^{\frac{1}{2}}$};
\node at (-.1+.003,.95) {\tiny $\$$};
\node at (-.5+.003,1.7) {\tiny $\$$};
\end{tikzpicture}$}\;.
\end{equation}

From Plancherel's theorem for $\fF$, we infer Plancherel's theorem for $\fF_{p,\varphi,\psi}$,
\begin{equation}
\|\fF_{2,\varphi,{\psi}}(xD_\varphi^{1/2})\|_2=\|\fF(xD_\varphi^{1/2})\|_2=\|xD_\varphi^{1/2}\|_2.
\end{equation}

\begin{theorem}[\bf Relative, quantum Hausdorff-{Young} inequality]\label{thm: relative HY}
	Let $\sP_\bullet$ be an irreducible subfactor planar algebra and let  $\varphi,\psi$ be faithful states on $\sP_{2,\pm}$. Then for any $x\in \sP_{2,+}, 1\leq p\leq 2$,  and dual  $2\leq q=p/(p-1)$, we have 
	\begin{equation}\label{equ: RQHY}
	\|\fF_{p,\varphi,{\psi}}(xD_\varphi^{1/p})\|_{q}\leq K_{p,\varphi,\psi}\|xD_\varphi^{1/p}\|_p\;.
	\end{equation}
Here $K_{p,\varphi,\psi}=\delta^{-2/p}\|D_{\psi}^{1/q-1/2}\|_\infty \|\fF(D_\varphi^{1/2-1/p})\|_1$. Pictorially, 
\[
\left\|\scalebox{.75}{$
\raisebox{-.8cm}{
$\begin{tikzpicture}
\path [fill=lightgray] (-.6,.4) rectangle (.9,2.2);
\draw [fill=white] (.1,2.2)--(.1,.7) arc[radius=.15, start angle=360, end angle=180]--(-.2,2.2);
\draw [fill=white] (.4,.4)--(.4,1.2) arc[radius=.15, start angle=180, end angle=0]--(.7,.4);
\draw[fill=green] (0,.7) rectangle (.6,1.2);
\draw [fill=cyan](-.2-.25,1.4) rectangle (.7-.25,2);
\node at (0,1.7) {\tiny $D_\psi^{\frac{1}{q}-\frac{1}{2}}$};
\node at (.3,.95) {\tiny $xD_\varphi^{\frac{1}{2}}$};
\node at (-.1+.003,.95) {\tiny $\$$};
\node at (-.5+.003,1.7) {\tiny $\$$};
\end{tikzpicture}$}
	$}\,
\right\|_{q}
\leq 
\left(
\left\Vert\,\raisebox{-.45cm}{\begin{tikzpicture}
\draw (0,-.5)--(0,.55);
\end{tikzpicture}}\,
\right\Vert^{-2/p}_{1}
  \left\Vert\,\raisebox{-.35cm}{
  \begin{tikzpicture}
  \path [fill=lightgray] (-.5,-.2) rectangle (.9,.7);
  \path [fill=white](.1,.7) rectangle (.6,-.2);
  \draw (.1,.7)--(.1,-.2);
  \draw (.6,.7)--(.6,-.2);
  \draw [fill=cyan] (-.2,0) rectangle (.8,.5);
  \node at (-.3+.003,.25) {\tiny $\$$};
  \node at (.35,.25) {\tiny$D_\psi^{\frac{1}{q}-\frac{1}{2}}$};
  \end{tikzpicture}}\,
\right\Vert_{\infty}\,\,
  \left\Vert\,\raisebox{-.35cm}{
  \begin{tikzpicture}
  \path [fill=lightgray] (-.3,-.2) rectangle (1,.7);
  \draw [fill=white](.1,.7)--(.1,0) arc[radius=.15, start angle=360, end angle=180]--(-.2,.7);
  \draw [fill=white](.6,-.2)--(.6,.5) arc[radius=.15, start angle=180, end angle=0]--(.9,-.2);
  \draw [fill=yellow] (-.05,0) rectangle (.75,.5);
  \node at (-.1+.003,.25) {\tiny $\$$};
  \node at (.35,.25) {\tiny$D_\varphi^{\frac{1}{2}-\frac{1}{p}}$};
  \end{tikzpicture}}\,
\right\Vert_{1}
\right)
\norm{\hskip -.17cm \raisebox{-.35cm}{
	\begin{tikzpicture}
	\path [fill=lightgray](.1,.7) rectangle (.6,-.2);
	\draw (.1,.7)--(.1,-.2);
	\draw (.6,.7)--(.6,-.2);
	\draw [fill=pink] (-.1,0) rectangle (.8,.5);
	\node at (-.2+.003,.25) {\tiny $\$$};
	\node at (.35,.25) {\tiny$xD_\varphi^{\frac{1}{p}}$};
	\end{tikzpicture}
}}_p\;.
\]
\end{theorem}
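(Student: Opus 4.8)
\emph{Strategy.} The plan is to reduce the relative transform $\fF_{p,\varphi,\psi}$ to the ordinary Fourier transform $\fF$ and then to strip off the three density factors appearing in $K_{p,\varphi,\psi}$ one at a time, each by a single elementary device: (i) the module (H\"older) inequality $\norm{ab}_q\leq\norm{a}_q\norm{b}_\infty$ on $\sP_{2,-}$; (ii) the duality intertwining multiplication and convolution, $\fF(ab)=\fF(a)*\fF(b)$ for $a,b\in\sP_{2,+}$, with $*$ the convolution on $\sP_{2,-}$ (the mirror of the identity displayed before Theorem~\ref{thm: Schur Product}, obtained by switching the shading); (iii) the quantum Young inequality [QY] on $\sP_{2,-}$; and (iv) the ordinary quantum Hausdorff--Young inequality, i.e.\ the red line of \eqref{Pic:subfactor-bounds}, with constant $\delta^{1-2/p}$. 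Each of these produces exactly one factor of $K_{p,\varphi,\psi}$.

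\emph{Steps.} Setting $y=xD_\varphi^{1/2}$, the definition of $\fF_{p,\varphi,\psi}$ gives $\fF_{p,\varphi,\psi}(xD_\varphi^{1/p})=\fF(y)\,D_\psi^{1/q-1/2}$. Since $\psi$ is faithful, $D_\psi$ is invertible and $D_\psi^{1/q-1/2}\in\sP_{2,-}$, so I first apply the module inequality on the right,
\[
\norm{\fF(y)\,D_\psi^{1/q-1/2}}_q\leq\norm{D_\psi^{1/q-1/2}}_\infty\,\norm{\fF(y)}_q .
\]
Next, because the powers of $D_\varphi$ commute, I factor $y=\bigl(xD_\varphi^{1/p}\bigr)\bigl(D_\varphi^{1/2-1/p}\bigr)$ inside $\sP_{2,+}$ and apply the multiplication-to-convolution duality to obtain
\[
\fF(y)=\fF\bigl(xD_\varphi^{1/p}\bigr)*\fF\bigl(D_\varphi^{1/2-1/p}\bigr)\quad\text{in }\sP_{2,-}.
\]
The quantum Young inequality in the form $\norm{u*v}_q\leq\delta^{-1}\norm{u}_q\norm{v}_1$ (the exponents satisfy $1/q+1=1+1/q$) then peels off $\norm{\fF(D_\varphi^{1/2-1/p})}_1$, and the ordinary quantum Hausdorff--Young inequality bounds the remaining term by $\delta^{1-2/p}\norm{xD_\varphi^{1/p}}_p$.

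\emph{Collecting constants.} Multiplying the three estimates yields
\[
\norm{\fF_{p,\varphi,\psi}(xD_\varphi^{1/p})}_q\leq\delta^{1-2/p}\,\delta^{-1}\,\norm{D_\psi^{1/q-1/2}}_\infty\,\norm{\fF(D_\varphi^{1/2-1/p})}_1\,\norm{xD_\varphi^{1/p}}_p ,
\]
and $\delta^{1-2/p}\delta^{-1}=\delta^{-2/p}$, which is precisely the asserted $K_{p,\varphi,\psi}$. As a sanity check at $p=q=2$ the factor $D_\varphi^{1/2-1/p}$ becomes the unit, $\fF(1)$ is the convolution identity with $\norm{\fF(1)}_1=\delta$, and $\norm{D_\psi^{0}}_\infty=1$, so $K_{2,\varphi,\psi}=\delta^{-1}\cdot\delta=1$, matching Plancherel's theorem for $\fF_{2,\varphi,\psi}$; at the endpoint $p=1,\,q=\infty$ the same scheme reproduces $K_{1,\varphi,\psi}=\delta^{-2}\norm{D_\psi^{-1/2}}_\infty\norm{\fF(D_\varphi^{-1/2})}_1$.

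\emph{Main obstacle.} The individual steps are routine; the genuine care lies in the $\delta$-bookkeeping, namely invoking [QY] and the Hausdorff--Young bound with their sharp subfactor constants ($\delta^{-1}$ and $\delta^{1-2/p}$ in the Markov-trace normalization) and verifying the multiplication-to-convolution identity pictorially with the correct power of $\delta$, so that the product collapses to exactly $\delta^{-2/p}$. I would pin these constants down by checking them against the two endpoint cases above before asserting the general inequality.
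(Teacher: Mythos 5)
Your proposal is correct, and it uses exactly the same three analytic ingredients as the paper's proof --- the quantum H\"older inequality, the quantum Young inequality with the sharp subfactor constant $\delta^{-1}$ (Theorem 4.13 of the cited reference), and the quantum Hausdorff--Young inequality with constant $\delta^{1-2/p}$ (Theorem 4.8 there) --- applied to the same factorization $xD_\varphi^{1/2}=(xD_\varphi^{1/p})D_\varphi^{1/2-1/p}$, so the bookkeeping $\delta^{1-2/p}\cdot\delta^{-1}=\delta^{-2/p}$ comes out identically. What differs is the organization, and the difference is genuine: you work entirely on the Fourier side, peeling off $\norm{D_\psi^{1/q-1/2}}_\infty$ by the module H\"older inequality, rewriting $\fF\bigl((xD_\varphi^{1/p})D_\varphi^{1/2-1/p}\bigr)$ as the convolution $\fF(xD_\varphi^{1/p})*\fF(D_\varphi^{1/2-1/p})$ in $\sP_{2,-}$, and applying Young directly. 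The paper instead dualizes: it expresses the $L^q$ norm as $\sup_{\norm{y}_p=1}$ of a pairing, performs a planar isotopy, and then applies H\"older to the pairing, Young to a convolution appearing on the \emph{dual} side (its picture $\widetilde{K}(y)$, a convolution involving $D_\psi^{1/q-1/2}y$ and $D_\varphi^{1/2-1/p}$), and H\"older once more to extract $\norm{y}_p$. The two arguments are dual implementations of the same chain of estimates: your multiplication-to-convolution identity is precisely what the paper's isotopy accomplishes pictorially, since the convolution on $\sP_{2,-}$ is by definition the push-forward of multiplication under $\fF$ (only the order of the two factors is convention-dependent, and Young's inequality is insensitive to that). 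Your route is shorter and transparent at the operator level; the paper's route stays inside the planar calculus, consistent with its theme that pictures themselves constitute the proof framework. One small correction: your concern about a hidden power of $\delta$ in the multiplication-to-convolution identity is unnecessary --- in this normalization the identity is exact --- and your endpoint check $K_{2,\varphi,\psi}=\delta^{-1}\norm{\fF(1)}_1=1$ already confirms this.
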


\begin{proof}  We give the elementary and insightful picture proof:
%	The quantity $\|\rf(xD_\varphi^{1/p})\|_{q}$ is represented by the following picture, 
	\begin{align*}
	&\hskip-.5cm\left\|\scalebox{.75}{$
\raisebox{-.8cm}{
$\begin{tikzpicture}
\path [fill=lightgray] (-.6,.4) rectangle (.9,2.2);
\draw [fill=white] (.1,2.2)--(.1,.7) arc[radius=.15, start angle=360, end angle=180]--(-.2,2.2);
\draw [fill=white] (.4,.4)--(.4,1.2) arc[radius=.15, start angle=180, end angle=0]--(.7,.4);
\draw[fill=green] (0,.7) rectangle (.6,1.2);
\draw [fill=cyan](-.2-.25,1.4) rectangle (.7-.25,2);
\node at (0,1.7) {\tiny $D_\psi^{\frac{1}{q}-\frac{1}{2}}$};
\node at (.3,.95) {\tiny $xD_\varphi^{\frac{1}{2}}$};
\node at (-.1+.003,.95) {\tiny $\$$};
\node at (-.5+.003,1.7) {\tiny $\$$};
\end{tikzpicture}$}
	$}\,
\right\|_{q}
=\left\|\raisebox{-.9cm}{
\scalebox{.75}{
	\begin{tikzpicture}
	\path [fill=lightgray] (-.6,.-.4) rectangle (1.2,2.2);
	\draw [fill=white] (.2,2.2)--(.2,-.1) arc[radius=.2, start angle=360, end angle=180]--(-.2,2.2);
	\draw [fill=white] (.6,-.4)--(.6,1.1) arc[radius=.2, start angle=180, end angle=0]--(1,-.4);
	\draw[fill=yellow] (0,.6) rectangle (.8,1.1);
	\draw[fill=pink] (.1,-.1) rectangle (.7,.4);
	\draw [fill=cyan](-.2-.25,1.4) rectangle (.7-.25,2);
	\node at (.3,.15) {\tiny $xD_\varphi^{1/p}$};
	\node at (0,1.7) {\tiny $D_\psi^{\frac{1}{q}-\frac{1}{2}}$};
	\node at (.4,.85) {\tiny $D_\varphi^{\frac{1}{2}-\frac{1}{p}}$};
	\node at (-.1+.003,.95) {\tiny $\$$};
	\node at (-.5+.003,1.7) {\tiny $\$$};
	\end{tikzpicture}
	}
}\right\|_q=
\left\|\scalebox{.75}{$
\raisebox{-1.05cm}{$
	\begin{tikzpicture}
	\path [fill=lightgray] (-.6,.1) rectangle (2.4,2.4);
	\draw [fill=white] (.1,2.4)--(.1,1.7) arc[radius=.1, start angle=180, end angle=270]--(1.35,1.6) arc[radius=.1, start angle=90, end angle=0]--(1.45,.7) arc[radius=.15, start angle=0, end angle=-90]--(1.15,.55) arc[radius=.15, start angle=270, end angle=180]--(1,1.35) arc[radius=.1, start angle=0, end angle=90]--(.2,1.45) arc[radius=.1, start angle=90, end angle=180]--(.1,.7) arc[radius=.15, start angle=360, end angle=180]--(-.2,2.4);
	\draw [fill=white] (.4,0.1)--(.4,1.2) arc[radius=.15, start angle=180, end angle=0]--(.7,.45)arc[radius=.1, start angle=180, end angle=270]--(1.65,.35) arc[radius=.1, start angle=270, end angle=360]--(1.75,1.2) arc[radius=.2, start angle=180, end angle=0]--(2.15,0.1);
	\draw[fill=pink] (0,.7) rectangle (.6,1.2);
	\draw [fill=yellow] (1.2,.7) rectangle (2,1.2);
	\draw [fill=cyan](-.15-.25,1.7) rectangle (.65-.25,2.2);
	\node at (0,1.95) {\tiny $D_\psi^{\frac{1}{q}-\frac{1}{2}}$};
	\node at (1.6,.95) {\tiny $D_\varphi^{\frac{1}{2}-\frac{1}{p}}$};
	\node at (.3,.95) {\tiny $xD_\varphi^{\frac{1}{p}}$};
	\node at (-.1+.003,.95) {\tiny $\$$};
	\node at (-.5+.003,1.95) {\tiny $\$$};
	\node at (1.1+.003,.95) {\tiny $\$$};
	\end{tikzpicture}
	$}
	$}
\,\right\|_{q}\\
\,=&\displaystyle\sup_{\|y\|_p=1}\left\vert\,\scalebox{.63}{$
\raisebox{-1.4cm}{$
	\begin{tikzpicture}
	\path [fill=lightgray] (-.6,-.15) rectangle (2.85,2.9);
	\path [fill=white] (.1,2.4) arc[radius=.15, start angle=180, end angle=90]--(2.3,2.55) arc[radius=.15, start angle=90, end angle=0]--(2.45,.4) arc[radius=.15, start angle=360, end angle=180]--(.4,.1)--(.4,.2) arc[radius=.15, start angle=180, end angle=270]--(2.55,.05) arc[radius=.15, start angle=270, end angle=360]--(2.7,2.6) arc[radius=.15, start angle=0, end angle=90]--(-.05,2.75) arc[radius=.15, start angle=90, end angle=180]--(-.2,2.4);
	\draw (.1,2.4) arc[radius=.15, start angle=180, end angle=90]--(2.3,2.55) arc[radius=.15, start angle=90, end angle=0]--(2.45,.4) arc[radius=.15, start angle=360, end angle=180]--(.4,.25)--(.4,.2) arc[radius=.15, start angle=180, end angle=270]--(2.55,.05) arc[radius=.15, start angle=270, end angle=360]--(2.7,2.6) arc[radius=.15, start angle=0, end angle=90]--(-.05,2.75) arc[radius=.15, start angle=90, end angle=180]--(-.2,2.4);
	\draw [fill=white] (.1,2.4)--(.1,1.7) arc[radius=.1, start angle=180, end angle=270]--(1.35,1.6) arc[radius=.1, start angle=90, end angle=0]--(1.45,.7) arc[radius=.15, start angle=0, end angle=-90]--(1.15,.55) arc[radius=.15, start angle=270, end angle=180]--(1,1.35) arc[radius=.1, start angle=0, end angle=90]--(.2,1.45) arc[radius=.1, start angle=90, end angle=180]--(.1,.7) arc[radius=.15, start angle=360, end angle=180]--(-.2,2.4);
	\draw [fill=white] (.4,0.1)--(.4,1.2) arc[radius=.15, start angle=180, end angle=0]--(.7,.45)arc[radius=.1, start angle=180, end angle=270]--(1.65,.35) arc[radius=.1, start angle=270, end angle=360]--(1.75,1.2) arc[radius=.2, start angle=180, end angle=0]--(2.15,.4);
	\draw[fill=pink] (0,.7) rectangle (.6,1.2);
	\draw [fill=yellow] (1.2,.7) rectangle (2,1.2);
	\draw [fill=cyan](-.15-.25,1.7) rectangle (.85-.25,2.2);
	\node at (0.08,1.95) {\tiny $D_\psi^{\frac{1}{q}-\frac{1}{2}}y$};
	\node at (1.65,.95) {\tiny $D_\varphi^{\frac{1}{2}-\frac{1}{p}}$};
	\node at (.35,.95) {\tiny $xD_\varphi^{\frac{1}{p}}$};
	\node at (-.1+.003,.95) {\tiny $\$$};
	\node at (-.5+.003,1.95) {\tiny $\$$};
	\node at (1.1+.003,.95) {\tiny $\$$};
	\end{tikzpicture}
	$}
$}\,\right\vert=\displaystyle\sup_{\|y\|_p=1}\left\vert\,\scalebox{.61}{$
\raisebox{-1.45cm}{$
	\begin{tikzpicture}
	\path [fill=lightgray] (-.8,-.6) rectangle (2.8,2.55);
    \draw [fill=white] (-.2,1.45)--(-.2,0) arc[radius=.15, start angle=180, end angle=360]--(.1,.7) arc[radius=.1, start angle=180, end angle=90]--(2.1,.8) arc[radius=.1, start angle=270, end angle=360]--(2.2,1.95) arc[radius=.15, start angle=0,end angle=180]--(1.9,1.2) arc[radius=.1, start angle=360, end angle=270]--(.2,1.1) arc[radius=.1, start angle=270, end angle=180]--(.1,1.45);
    \draw [fill=white](.6,.5) arc[radius=.15,, start angle=180, end angle=0]--(.9,0) arc[radius=.1, start angle=180, end angle=270]--(2.3,-.1) arc[radius=.1, start angle=270, end angle=360]--(2.4,2.1) arc[radius=.15, start angle=0, end angle=90]--(1.45,2.25) arc[radius=.15, start angle=90, end angle=180]--(1.3,1.45) arc[radius=.15, start angle=0, end angle=-180]--(1,1.95) arc[radius=.15, start angle=0, end angle=90]--(.25,2.1)arc[radius=.15, start angle=90, end angle=180]--(-.2,1.95)--(-.2,2.3) arc[radius=.1, start angle=180, end angle=90]--(2.5,2.4) arc[radius=.1, start angle=90, end angle=0]--(2.6,-.3) arc[radius=.1, start angle=0, end angle=-90]--(.75,-.4) arc[radius=.15, start angle=270, end angle=180]--(.6,0);
    \draw [fill=pink] (0,0) rectangle (.7,.5);
    \draw [fill=yellow] (1.2,1.45) rectangle (2,1.95);
	\draw [fill=cyan](-.25-.25,1.45) rectangle (.8-.25,1.95);
	\node at (0,1.7) {\tiny $D_\psi^{\frac{1}{q}-\frac{1}{2}}y$};
	\node at (1.6,1.7) {\tiny $D_\varphi^{\frac{1}{2}-\frac{1}{p}}$};
	\node at (.35,.25) {\tiny$xD_\varphi^{\frac{1}{p}}$};
	\node at (-.1+.003,.25) {\tiny $\$$};
	\node at (-.6+.003,1.7) {\tiny $\$$};
	\node at (2.1-.003,1.7) {\tiny $\$$};
	\end{tikzpicture}
	$}
$}\,\right\vert\\
\leq& \displaystyle\widetilde{K}\left\|\raisebox{-.38cm}{
\begin{tikzpicture}
\path [fill=lightgray] (-.3,-.2) rectangle (1,.7);
\draw [fill=white](.1,.7)--(.1,0) arc[radius=.15, start angle=360, end angle=180]--(-.2,.7);
\draw [fill=white](.6,-.2)--(.6,.5) arc[radius=.15, start angle=180, end angle=0]--(.9,-.2);
\draw [fill=pink] (0,0) rectangle (.7,.5);
\node at (-.1+.003,.25) {\tiny $\$$};
\node at (.35,.25) {\tiny$xD_\varphi^{\frac{1}{p}}$};
\end{tikzpicture}
}\right\|_{q}
 \leq \delta^{1-2/p}\displaystyle\widetilde{K}
	\norm{\hskip -.17cm \raisebox{-.35cm}{
	\begin{tikzpicture}
	\path [fill=lightgray](.1,.7) rectangle (.6,-.2);
	\draw (.1,.7)--(.1,-.2);
	\draw (.6,.7)--(.6,-.2);
	\draw [fill=pink] (-.1,0) rectangle (.8,.5);
	\node at (-.2+.003,.25) {\tiny $\$$};
	\node at (.35,.25) {\tiny$xD_\varphi^{\frac{1}{p}}$};
	\end{tikzpicture}
}}_p\;.
\end{align*}
The first inequality is a consequence of the quantum  H\"older inequality. The second inequality is a consequence of the quantum Hausdorff-Young inequality~(Theorem 4.8~\cite{JLW16}),  along with 
\[
\delta = 
\left\Vert\,\raisebox{-.28cm}{\begin{tikzpicture}
\path [fill=lightgray] (0,-.35) rectangle (.1,.355);
\path (0,-.35) rectangle (-.1,.355);
\draw (0,-.35)--(0,.355);
\end{tikzpicture}}\,
\right\Vert_{1}\;.
\]
Here the constant is $\widetilde{K}=\sup_{\norm{y}_p=1}\widetilde{K}(y)$, with $\widetilde{K}(y)$  equal to the following picture:
\begin{align*}
    &\norm{
    \scalebox{.65}{$
    \raisebox{-.6cm}{
	\begin{tikzpicture}
	\path [fill=lightgray] (-.8,.8) rectangle (2.5,2.2);
	%\draw [fill=white] (-.2,1.45)--(-.2,.8) --(2.2,.8)--(2.2,1.95) arc[radius=.15, start angle=0,end angle=180]--(1.9,1.2) arc[radius=.1, start angle=360, end angle=270]--(.2,1.1) arc[radius=.1, start angle=270, end angle=180]--(.1,1.45);
	\draw [fill=white](2.2,.8)--(2.2,1.95) arc[radius=.15, start angle=0,end angle=180]--(1.9,1.2) arc[radius=.1, start angle=360, end angle=270]--(.2,1.1) arc[radius=.1, start angle=270, end angle=180]--(.1,1.45)--(-.2,1.45)--(-.2,.8);
	\draw [fill=white](-.2, 2.2)--(-.2,1.95)--(.1,1.95) arc[radius=.15, start angle=180, end angle=90]--(.9,2.1) arc[radius=.1, start angle=90, end angle=0]--(1,1.45) arc[radius=.15, start angle=180, end angle=360]--(1.3,2.2);
	%\draw [fill=green] (0,0) rectangle (.7,.5);
	\draw [fill=yellow] (1.2,1.45) rectangle (2,1.95);
	\draw [fill=cyan](-.25-.25,1.45) rectangle (.8-.25,1.95);
	\node at (0,1.7) {\tiny $D_\psi^{\frac{1}{q}-\frac{1}{2}}y$};
	\node at (1.6,1.7) {\tiny $D_\varphi^{\frac{1}{2}-\frac{1}{p}}$};
	%\node at (.35,.25) {\tiny$xD_\varphi^{1/p}$};
	%\node at (-.1+.003,.25) {\tiny $\$$};
	\node at (-.6+.003,1.7) {\tiny $\$$};
	\node at (2.1-.003,1.7) {\tiny $\$$};
	%	\node at (1.1+.003,.5) {\tiny $\$$};
	\end{tikzpicture}
	}$}
	}_{p}\leq \frac{1}{\delta}
	\norm{\raisebox{-.35cm}{
	\begin{tikzpicture}
	\path [fill=lightgray] (-.5,-.2) rectangle (1,.7);
	\path [fill=white](.1,.7) rectangle (.6,-.2);
	\draw (.1,.7)--(.1,-.2);
	\draw (.6,.7)--(.6,-.2);
	\draw [fill=cyan] (-.2,0) rectangle (.9,.5);
	\node at (-.3+.003,.25) {\tiny $\$$};
	\node at (.35,.25) {\tiny$D_\psi^{\frac{1}{q}-\frac{1}{2}}y$};
	\end{tikzpicture}
}
	}_{p}\,
	\norm{\raisebox{-.35cm}{
	\begin{tikzpicture}
	\path [fill=lightgray] (-.3,-.2) rectangle (1,.7);
	\draw [fill=white](.1,.7)--(.1,0) arc[radius=.15, start angle=360, end angle=180]--(-.2,.7);
	\draw [fill=white](.6,-.2)--(.6,.5) arc[radius=.15, start angle=180, end angle=0]--(.9,-.2);
	\draw [fill=yellow] (-.05,0) rectangle (.75,.5);
	\node at (-.1-.02,.25) {\tiny $\$$};
	\node at (.35,.25) {\tiny$D_\varphi^{\frac{1}{2}-\frac{1}{p}}$};
	\end{tikzpicture}
}}_1\\
&\qquad\leq \frac{1}{\delta}\norm{\raisebox{-.35cm}{
	\begin{tikzpicture}
	\path [fill=lightgray] (-.5,-.2) rectangle (.9,.7);
	\path [fill=white](.1,.7) rectangle (.6,-.2);
	\draw (.1,.7)--(.1,-.2);
	\draw (.6,.7)--(.6,-.2);
	\draw [fill=cyan] (-.2,0) rectangle (.8,.5);
	\node at (-.3+.003,.25) {\tiny $\$$};
	\node at (.35,.25) {\tiny$D_\psi^{\frac{1}{q}-\frac{1}{2}}$};
	\end{tikzpicture}
}}_{\infty}\,
	\norm{\raisebox{-.35cm}{
	\begin{tikzpicture}
	\path [fill=lightgray] (-.3,-.2) rectangle (1,.7);
	\draw [fill=white](.1,.7)--(.1,0) arc[radius=.15, start angle=360, end angle=180]--(-.2,.7);
	\draw [fill=white](.6,-.2)--(.6,.5) arc[radius=.15, start angle=180, end angle=0]--(.9,-.2);
	\draw [fill=yellow] (-.05,0) rectangle (.75,.5);
	\node at (-.1-.025,.25) {\tiny $\$$};
	\node at (.35,.25) {\tiny$D_\varphi^{\frac{1}{2}-\frac{1}{p}}$};
	\end{tikzpicture}
}}_1\norm{y}_{p}\;.
\end{align*}
To obtain the first inequality, we use the quantum Young inequality (Theorem 4.13 in~\cite{JLW16}). To obtain the second inequality, we use the quantum H\"older inequality.
\end{proof}

\subsection{Relative Entropy}
We formulate relative-entropy (RE) and the corresponding relative entropic  quantum (REQ) uncertainty principle.
For two weights $\omega, \varphi$ on $\sP_{2,+}$, recall that  the relative entropy~\cite{Araki76} is  
$$S(\omega\|\varphi)=\tr_2(D_\omega(\log D_\omega-\log D_\varphi))\;.$$

\subsection{The Relative Entropic Quantum Uncertainty Principle}
For a weight $\omega$ on $\sP_{2,+}$,  define  $\widehat{\omega}$ as the weight on $\sP_{2,-}$ given by the density matrix 
%induced by the Fourier transform $\fF$, namely,
\be\label{DefineHatOmega}
D_{\reallywidehat{\omega}}=|\fF(D_\omega^{1/2})|^2\;.
\ee
It follows that $\reallywidehat{\omega}(1)=\omega(1)$. If $\omega$ is a state, so is $\widehat\omega$.

\begin{theorem}[\bf REQ Uncertainty Principle]\label{thm:hbup1}
Let $\sP_\bullet$ be an irreducible subfactor planar algebra and $\varphi,\psi$ be faithful weights on $\sP_{2,\pm}$. 
Then for any state $\omega$ on $\sP_{2,+}$, 
\begin{equation*}\label{equ: relative entropyt UP}
\begin{aligned}
S(\omega\|\varphi)+S(\reallywidehat{\omega}\|\psi)\leq \log\|D_{\psi}^{-1}\|_\infty-\frac{1}{\delta^2}\tr_2(\log D_\varphi)-2\log\delta.
\end{aligned}
\end{equation*}
\end{theorem}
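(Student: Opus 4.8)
The plan is to derive the entropic bound by differentiating the relative quantum Hausdorff--Young inequality (Theorem~\ref{thm: relative HY}) at $p=2$, in the spirit of Hirschman and Beckner. First I would fix the state $\omega$ and choose the input element $x=D_\omega^{1/2}D_\varphi^{-1/2}\in\sP_{2,+}$, so that for every $p\in[1,2]$ the input is $xD_\varphi^{1/p}=D_\omega^{1/2}D_\varphi^{1/p-1/2}$ while the transformed element is the $p$-independent $\fF(D_\omega^{1/2})$ dressed by $D_\psi^{1/q-1/2}$. Since $D_{\reallywidehat{\omega}}=|\fF(D_\omega^{1/2})|^2$, this ties the two sides of \eqref{equ: RQHY} directly to $\omega$ and $\reallywidehat{\omega}$. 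Writing $g(p),k(p),f(p)$ for the logarithms of the left-hand norm, of $K_{p,\varphi,\psi}$, and of the right-hand norm, Theorem~\ref{thm: relative HY} reads $g(p)\le k(p)+f(p)$ on $[1,2]$.

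Next I would check that this inequality is an equality at $p=2$. Plancherel's theorem for $\fF_{p,\varphi,\psi}$ gives $g(2)=f(2)=\log\|D_\omega^{1/2}\|_2=0$ (as $\omega$ is a state), while $K_{2,\varphi,\psi}=\delta^{-1}\|\fF(1)\|_1$. Because $\fF(1)$ is the positive cap--cup (Jones-type) element with $\|\fF(1)\|_1=\delta$, one has $K_{2,\varphi,\psi}=1$, so $k(2)=0$ and $h:=k+f-g$ satisfies $h\ge 0$ on $[1,2]$ with $h(2)=0$. Hence the one-sided derivative obeys $h'(2^-)\le 0$, i.e. $f'(2)-g'(2)\le -k'(2)$.

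Then I would compute the three derivatives at $p=2$. Up to the prefactors $1/p$ and $1/q$, the quantities $f$ and $g$ are logarithms of the sandwiched expressions $\tr_2\bigl((D_\varphi^{1/p-1/2}D_\omega D_\varphi^{1/p-1/2})^{p/2}\bigr)$ and $\tr_2\bigl((D_\psi^{1/q-1/2}D_{\reallywidehat{\omega}}D_\psi^{1/q-1/2})^{q/2}\bigr)$, that is, sandwiched R\'enyi divergences whose derivative at the parameter value $1$ produces relative entropy. Using $\frac{d}{dt}\tr(M^{\alpha})=\alpha'\tr(M^{\alpha}\log M)+\alpha\,\tr(M^{\alpha-1}M')$, a routine computation gives $f'(2)=\tfrac14 S(\omega\|\varphi)$ and $g'(2)=-\tfrac14 S(\reallywidehat{\omega}\|\psi)$, so the left side of the theorem equals $4\bigl(f'(2)-g'(2)\bigr)$. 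For $k(p)=-\tfrac2p\log\delta+\log\|D_\psi^{1/q-1/2}\|_\infty+\log\|\fF(D_\varphi^{1/2-1/p})\|_1$, the first summand contributes $-2\log\delta$ to $-4k'(2)$, and the middle one contributes $\log\|D_\psi^{-1}\|_\infty$ (using $1/q-1/2<0$ for $p<2$, so that $\|D_\psi^{1/q-1/2}\|_\infty$ is governed by $D_\psi^{-1}$). The last summand is the crux: expanding $\fF(D_\varphi^{r})=\fF(1)+r\,\fF(\log D_\varphi)+O(r^2)$ with $r=1/2-1/p$ and pairing against the support projection $\delta^{-1}\fF(1)$, the Plancherel identity $\tr_2(\fF(1)^*\fF(b))=\tr_2(b)$ (the $L^2$-isometry applied with $b=\log D_\varphi$) collapses the derivative to $\tfrac1{4\delta^2}\tr_2(\log D_\varphi)$, supplying the remaining $-\tfrac1{\delta^2}\tr_2(\log D_\varphi)$ in $-4k'(2)$. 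Assembling the pieces yields $S(\omega\|\varphi)+S(\reallywidehat{\omega}\|\psi)\le -4k'(2)=\log\|D_\psi^{-1}\|_\infty-\tfrac1{\delta^2}\tr_2(\log D_\varphi)-2\log\delta$.

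The main obstacle will be the differentiation of the trace norm $\|\fF(D_\varphi^{1/2-1/p})\|_1$ at $p=2$: since $\fF(1)$ is positive but rank-deficient, the trace norm is a priori only one-sidedly differentiable, and one must control the component of $\fF(\log D_\varphi)$ off the support of $\fF(1)$ to certify that the clean value $\tfrac1{4\delta^2}\tr_2(\log D_\varphi)$ is correct, i.e. that no extra non-negative boundary term survives. I would handle this through the duality description $\|M\|_1=\sup_{\|u\|_\infty\le1}|\tr_2(uM)|$, whose optimizer at $r=0$ is the support projection $\delta^{-1}\fF(1)$, so that the envelope principle gives the first-order variation $\delta^{-1}\mathrm{Re}\,\tr_2\bigl(\fF(1)\fF(\log D_\varphi)\bigr)=\delta^{-1}\tr_2(\log D_\varphi)$; justifying that this is the full (and not merely one-sided) derivative is the single genuinely delicate step. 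Everything else reduces to bookkeeping of one-variable derivatives, together with the already-established relative Hausdorff--Young inequality and Plancherel's theorem.
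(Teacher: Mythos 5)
Your strategy coincides with the paper's own proof: the paper also fixes the input $D_\omega^{1/2}D_\varphi^{1/p-1/2}$, notes its image under $\fF_{p,\varphi,\psi}$ has the same $q$-norm as $D_{\widehat{\omega}}^{1/2}D_\psi^{1/q-1/2}$, uses Plancherel (with $K_{2,\varphi,\psi}=1$) to get equality at $p=2$, and reads the theorem off from the sign of the left derivative at $p=2$; the paper quotes the derivative values from Lemma~\ref{lem: diff}, which it states without proof. Your values $f'(2)=\tfrac14 S(\omega\|\varphi)$, $g'(2)=-\tfrac14 S(\widehat{\omega}\|\psi)$, and the contributions of $\delta^{-2/p}$ and $\|D_\psi^{1/q-1/2}\|_\infty$ to $k'(2)$ all agree with that lemma.

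The gap is exactly the step you flag as delicate, and your proposed resolution fails. The optimizer of $\sup_{\|u\|_\infty\le1}|\tr_2(uM)|$ at $M=\fF(1)=\delta e_1$ (here $e_1$ is the Jones projection, a minimal projection by irreducibility) is \emph{not} unique: optimality forces $e_1ue_1=e_1$, and since a contraction fixing a vector also fixes it under its adjoint, the optimizer set is $\{e_1+(1-e_1)u'(1-e_1):\|u'\|_\infty\le1\}$. Danskin's envelope theorem then gives the left derivative as the \emph{minimum} over this set,
\[
\frac{d}{dr}\Big|_{r=0^-}\bigl\|\fF(D_\varphi^{r})\bigr\|_1=\frac{1}{\delta}\tr_2(\log D_\varphi)-\bigl\|(1-e_1)\fF(\log D_\varphi)(1-e_1)\bigr\|_1,
\]
and it is the left derivative that enters your argument, since $r=\tfrac12-\tfrac1p\to0^-$ as $p\to2^-$. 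The kink is genuinely present: in the $\mathbb{Z}_2$ group planar algebra, where $\delta=\sqrt2$, $\fF$ is the unitary discrete Fourier transform and $D_\varphi=(\varphi_0,\varphi_1)$, one has $\|\fF(D_\varphi^{r})\|_1=\tfrac{1}{\sqrt2}\bigl(\varphi_0^r+\varphi_1^r+|\varphi_0^r-\varphi_1^r|\bigr)$, whose left and right derivatives at $r=0$ are $\tfrac{1}{\sqrt2}\bigl(\log(\varphi_0\varphi_1)\mp|\log(\varphi_0/\varphi_1)|\bigr)$, unequal whenever $\varphi_0\neq\varphi_1$. Because the extra term lowers $k'_-(2)$, your differentiation scheme only yields
\[
S(\omega\|\varphi)+S(\widehat{\omega}\|\psi)\le \log\|D_{\psi}^{-1}\|_\infty-\frac{1}{\delta^2}\tr_2(\log D_\varphi)-2\log\delta+\frac{1}{\delta}\bigl\|(1-e_1)\fF(\log D_\varphi)(1-e_1)\bigr\|_1,
\]
which is strictly weaker than the stated theorem unless the off-support component vanishes. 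This cannot be repaired inside your scheme: the left derivative really does contain the extra term, so no envelope or regularity argument can certify the clean value. Your proposal is thus faithful to the paper's route, but the single computation the paper leaves unjustified (the third formula of Lemma~\ref{lem: diff}, interpreted as a left derivative) is precisely the one your argument gets wrong, and it deserves genuine scrutiny rather than the uniqueness claim you make for the dual optimizer.
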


\begin{proof}
Note that  $\fF_{p,\varphi,\psi} (D_\omega^{1/2}  D_\varphi^{1/p-1/2})=\fF(D_\omega^{1/2}) D_\psi^{1/q-1/2}$. As $\norm{AB}_{p}=\norm{|A|\,B}_{p}$, using \eqref{DefineHatOmega}, we infer that $\fF_{p,\varphi,\psi} (D_\omega^{1/2}  D_\varphi^{1/p-1/2})$ and $D_{\reallywidehat{\omega}}^{1/2}D_\psi^{1/q-1/2}$ have the same $q$ norms.  
Define the function $f(p)$ as a picture, where $q=p/(p-1)$, 
\begin{equation*}
    f(p)=\norm{\raisebox{-.7cm}{
	\begin{tikzpicture}
	\path [fill=lightgray] (-.6,.5) rectangle (.7,2.1);
	\path [fill=white] (.2,2.1) rectangle (-.2,.5);
	\draw (.2,2.1)--(.2,.5);
	\draw (-.2,2.1)--(-.2,.5);
	\draw[fill=green] (-.25,.7) rectangle (.25,1.2);
	\draw [fill=yellow](-.15-.25,1.4) rectangle (.65-.25,1.9);
	\node at (0,1.65) {\tiny $D_\psi^{\frac{1}{q}-\frac{1}{2}}$};
	\node at (0,.95) {\tiny $D_{\reallywidehat{\omega}}^{\frac{1}{2}}$};
	\node at (-.35+.003,.95) {\tiny $\$$};
	\node at (-.5+.003,1.65) {\tiny $\$$};
	\end{tikzpicture}}
	\ }_q-
%	K_{p,\varphi,\psi}
\left(
\left\Vert\,\raisebox{-.45cm}{\begin{tikzpicture}
\path [fill=lightgray] (0,-.25) rectangle (.1,.55);
\path (0,-.35) rectangle (-.1,.55);
\draw (0,-.25)--(0,.55);
\end{tikzpicture}}\,
\right\Vert^{-2/p}_{1}
  \left\Vert\,\raisebox{-.35cm}{
  \begin{tikzpicture}
  \path [fill=lightgray] (-.5,-.2) rectangle (.9,.7);
  \path [fill=white](.1,.7) rectangle (.6,-.2);
  \draw (.1,.7)--(.1,-.2);
  \draw (.6,.7)--(.6,-.2);
  \draw [fill=cyan] (-.2,0) rectangle (.8,.5);
  \node at (-.3+.003,.25) {\tiny $\$$};
  \node at (.35,.25) {\tiny$D_\psi^{\frac{1}{q}-\frac{1}{2}}$};
  \end{tikzpicture}}\,
\right\Vert_{\infty}\,\,
  \left\Vert\,\raisebox{-.35cm}{
  \begin{tikzpicture}
  \path [fill=lightgray] (-.3,-.2) rectangle (1,.7);
  \draw [fill=white](.1,.7)--(.1,0) arc[radius=.15, start angle=360, end angle=180]--(-.2,.7);
  \draw [fill=white](.6,-.2)--(.6,.5) arc[radius=.15, start angle=180, end angle=0]--(.9,-.2);
  \draw [fill=yellow] (-.05,0) rectangle (.75,.5);
  \node at (-.1+.003,.25) {\tiny $\$$};
  \node at (.35,.25) {\tiny$D_\varphi^{\frac{1}{2}-\frac{1}{p}}$};
  \end{tikzpicture}}\,
\right\Vert_{1}
\right)
	\norm{\raisebox{-.7cm}{
	\begin{tikzpicture}
	\path [fill=lightgray] (.2,2.1) rectangle (-.2,.5);
\draw (.2,2.1)--(.2,.5);
\draw (-.2,2.1)--(-.2,.5);
	\draw[fill=green] (-.25,.7) rectangle (.25,1.2);
	\draw [fill=cyan](-.15-.25,1.4) rectangle (.65-.25,1.9);
	\node at (0,1.65) {\tiny $D_\varphi^{\frac{1}{p}-\frac{1}{2}}$};
\node at (0,.95) {\tiny $D_\omega^{\frac{1}{2}}$};
\node at (-.35+.003,.95) {\tiny $\$$};
\node at (-.5+.003,1.65) {\tiny $\$$};
	\end{tikzpicture}}\ }_p
	\;.
\end{equation*}
The picture $f(p)$  is negative for $1\leq p\leq 2$, by Theorem~\ref{thm: relative HY}.  Also $f(2)=0$ by Plancherel's theorem, so the left derivative $f'_{-}(2)\geq0$. Then Theorem \ref{thm:hbup1} is a consequence of the expressions for the derivatives in the following lemma.
\end{proof}

\begin{lemma}\label{lem: diff}
For any weight $\omega$ on $\sP_{2,+}$, we have
\begin{equation*}
\begin{aligned}
\left. \frac{d}{dp}\norm{\raisebox{-.7cm}{
	\begin{tikzpicture}
	\path [fill=lightgray] (-.6,.5) rectangle (.7,2.1);
	\path [fill=white] (.2,2.1) rectangle (-.2,.5);
	\draw (.2,2.1)--(.2,.5);
	\draw (-.2,2.1)--(-.2,.5);
	\draw[fill=green] (-.25,.7) rectangle (.25,1.2);
	\draw [fill=yellow](-.15-.25,1.4) rectangle (.65-.25,1.9);
	\node at (0,1.65) {\tiny $D_\psi^{\frac{1}{q}-\frac{1}{2}}$};
	\node at (0,.95) {\tiny $D_{\reallywidehat{\omega}}^{\frac{1}{2}}$};
	\node at (-.35+.003,.95) {\tiny $\$$};
	\node at (-.5+.003,1.65) {\tiny $\$$};
	\end{tikzpicture}}
	\ }_q\right|_{p=2}=\raisebox{-1cm}{
	\begin{tikzpicture}
	\node at (0,.1)[right] {$-\displaystyle\frac{1}{4}\tr_2(D_{\reallywidehat{\omega}})^{1/2}\log\tr_2(D_{\reallywidehat{\omega}})$};
	\node at (0,-.8) [right]{$-\displaystyle\frac{1}{4\,\tr_2(D_{\reallywidehat{\omega}})^{1/2}}S({\reallywidehat{\omega}\|\psi})$};
	\end{tikzpicture}
}\;,
\end{aligned}
\end{equation*}
\begin{equation*}
\begin{aligned}
\left. \frac{d}{dp} \norm{\raisebox{-.7cm}{
	\begin{tikzpicture}
	\path [fill=lightgray] (.2,2.1) rectangle (-.2,.5);
\draw (.2,2.1)--(.2,.5);
\draw (-.2,2.1)--(-.2,.5);
	\draw[fill=green] (-.25,.7) rectangle (.25,1.2);
	\draw [fill=cyan](-.15-.25,1.4) rectangle (.65-.25,1.9);
	\node at (0,1.65) {\tiny $D_\varphi^{\frac{1}{p}-\frac{1}{2}}$};
\node at (0,.95) {\tiny $D_\omega^{\frac{1}{2}}$};
\node at (-.35+.003,.95) {\tiny $\$$};
\node at (-.5+.003,1.65) {\tiny $\$$};
	\end{tikzpicture}}\ }_p \right|_{p=2}&=\raisebox{-.95cm}{
\begin{tikzpicture}
\node at (0,.1)[right] {$-\displaystyle\frac{1}{4}\tr_2(D_\omega)^{1/2}\log\tr_2(D_\omega)$};
\node at (0,-.8) [right]{$+\displaystyle\frac{1}{4\,\tr_2(D_\omega)^{1/2}}S(\omega\|\varphi)$};
\end{tikzpicture}
}\;,
\end{aligned}
\end{equation*}
\begin{equation*}\label{equ: derivatve of constant}
\scalebox{.9}{$
\begin{aligned}
\left.\frac{d}{dp} 
%K_{p,\varphi, \psi}\right|_{p=2}
\left(
\left\Vert\,\raisebox{-.45cm}{\begin{tikzpicture}
\path [fill=lightgray] (0,-.25) rectangle (.1,.55);
\path (0,-.35) rectangle (-.1,.55);
\draw (0,-.25)--(0,.55);
\end{tikzpicture}}\,
\right\Vert^{-2/p}_{1}
  \left\Vert\,\raisebox{-.35cm}{
  \begin{tikzpicture}
  \path [fill=lightgray] (-.5,-.2) rectangle (.9,.7);
  \path [fill=white](.1,.7) rectangle (.6,-.2);
  \draw (.1,.7)--(.1,-.2);
  \draw (.6,.7)--(.6,-.2);
  \draw [fill=cyan] (-.2,0) rectangle (.8,.5);
  \node at (-.3+.003,.25) {\tiny $\$$};
  \node at (.35,.25) {\tiny$D_\psi^{\frac{1}{q}-\frac{1}{2}}$};
  \end{tikzpicture}}\,
\right\Vert_{\infty}\,\,
  \left\Vert\,\raisebox{-.35cm}{
  \begin{tikzpicture}
  \path [fill=lightgray] (-.3,-.2) rectangle (1,.7);
  \draw [fill=white](.1,.7)--(.1,0) arc[radius=.15, start angle=360, end angle=180]--(-.2,.7);
  \draw [fill=white](.6,-.2)--(.6,.5) arc[radius=.15, start angle=180, end angle=0]--(.9,-.2);
  \draw [fill=yellow] (-.05,0) rectangle (.75,.5);
  \node at (-.1+.003,.25) {\tiny $\$$};
  \node at (.35,.25) {\tiny$D_\varphi^{\frac{1}{2}-\frac{1}{p}}$};
  \end{tikzpicture}}\,
\right\Vert_{1}
\right) \right|_{p=2}
=
\raisebox{-.9cm}{
\begin{tikzpicture}
\node (.25,.1) [right] {$\displaystyle\frac{1}{2}\log\delta-\displaystyle\frac{1}{4}\log\left\|D_{\psi}^{-1}\right\|_\infty$};
\node at (0,-.8) [right] {$+\displaystyle\frac{1}{4\,\delta^2}tr_2(\log D_\varphi).$};
\end{tikzpicture}
}\;.
\end{aligned}
$}
\end{equation*}
\end{lemma}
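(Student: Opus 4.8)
The plan is to treat all three quantities as $p$-derivatives of $L^{r}$-norms of $p$-dependent operators and to extract the entropies by logarithmic differentiation, in the spirit of Hirschman's passage from the Hausdorff--Young bound to an entropic inequality. Writing a generic norm as $\norm{G(p)}_{r}=\bigl(\tr_2|G(p)|^{r}\bigr)^{1/r}$ with $r=r(p)$ equal to $q$ or $p$, I would use
\[
\frac{d}{dp}\norm{G(p)}_{r}=\norm{G(p)}_{r}\,\frac{d}{dp}\!\left[\frac{1}{r}\log\tr_2\,|G(p)|^{r}\right].
\]
The decisive simplification is that at $p=2$ one has $q=2$ and both tilt exponents $\tfrac1q-\tfrac12$ and $\tfrac12-\tfrac1p$ vanish, so $|G(2)|$ collapses to $D_{\widehat\omega}^{1/2}$ in the first picture and to $D_\omega^{1/2}$ in the second; Plancherel's theorem then produces the common prefactor $\tr_2(D_{\widehat\omega})^{1/2}=\tr_2(D_\omega)^{1/2}$.

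For the two norm derivatives I would perform the differentiation by separating the two sources of $p$-dependence: the exponent $r(p)$ itself, and the density power $D_\psi^{1/q-1/2}$ (respectively $D_\varphi^{1/p-1/2}$). Differentiating the exponent under the trace yields the von Neumann piece $\tfrac12\tr_2(D\log D)$ together with the entropy-free term $-\tfrac{1}{r^2}\log\tr_2|G|^{r}$ that supplies the $\tr_2(D)^{1/2}\log\tr_2(D)$ summand, while differentiating the density power through $\frac{d}{d\epsilon}D^{a(\epsilon)}=a'(\epsilon)\,D^{a(\epsilon)}\log D$ yields the cross term $\tr_2(D\log D_\psi)$ (respectively $\tr_2(D\log D_\varphi)$). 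Throughout I would invoke the trace identity $\frac{d}{d\epsilon}\tr_2 A(\epsilon)^{r}=r\,\tr_2\!\bigl(A^{r-1}A'(\epsilon)\bigr)$, whose validity by cyclicity removes all operator-ordering difficulties at $p=2$, where every operator in sight is a power of a single density. Combining the von Neumann and cross contributions assembles the relative entropy $S(\,\cdot\,\|\,\cdot\,)=\tr_2\bigl(D(\log D-\log D_\bullet)\bigr)$; the overall factor $\tfrac14$ and the opposite signs of the two relative-entropy terms come from careful bookkeeping of the chain-rule weights, notably $\frac{dq}{dp}\big|_{p=2}=-1$ governing the norm index in the first formula against the tilt derivatives $\pm\tfrac1{p^2}$, which enter with opposite signs in the two cases.

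For the constant I would take the logarithm of $K_{p,\varphi,\psi}=\delta^{-2/p}\norm{D_\psi^{1/q-1/2}}_\infty\norm{\fF(D_\varphi^{1/2-1/p})}_1$ and differentiate the three factors separately, using $K_{2,\varphi,\psi}=1$ so that the derivative of $K$ coincides with that of $\log K$. The first two factors are elementary: $\frac{d}{dp}\bigl(-\tfrac2p\log\delta\bigr)=\tfrac2{p^2}\log\delta\to\tfrac12\log\delta$, and since the exponent $\tfrac1q-\tfrac12\le 0$ for $p\le 2$ we have $\norm{D_\psi^{1/q-1/2}}_\infty=\norm{D_\psi^{-1}}_\infty^{-(1/q-1/2)}$, whose logarithmic $p$-derivative at $p=2$ is $-\tfrac14\log\norm{D_\psi^{-1}}_\infty$.

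The hard part will be the Fourier factor $\norm{\fF(D_\varphi^{1/2-1/p})}_1$. The operator $\fF(D_\varphi^{t})$ is not positive for general faithful $\varphi$, and at $p=2$ one has $\fF(1)=\delta e$, a positive multiple of the Jones projection, which is singular; hence $\norm{\fF(\,\cdot\,)}_1$ need not be differentiable there and only the one-sided limit as $t=\tfrac12-\tfrac1p\uparrow 0$ is available. I would expand $\fF(D_\varphi^{t})=\delta e+t\,\fF(\log D_\varphi)+O(t^2)$, argue that the potentially non-smooth contribution supported on the complement of the range of $e$ does not affect this one-sided derivative, and thereby reduce the computation to the trace of the positive part, evaluated with the normalization identity $\tr_2\fF(x)=\delta^{-1}\tr_2(x)$ together with $\norm{\fF(1)}_1=\delta$. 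This gives $\frac{d}{dt}\log\norm{\fF(D_\varphi^{t})}_1\big|_{t=0}=\delta^{-2}\tr_2(\log D_\varphi)$, and multiplication by $\frac{dt}{dp}\big|_{p=2}=\tfrac14$ yields the remaining summand $\tfrac1{4\delta^2}\tr_2(\log D_\varphi)$. Making this singular one-sided differentiability rigorous, as opposed to the routine smooth computations behind the other terms, is where I expect the real work to lie.
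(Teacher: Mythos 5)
First, a caveat on the comparison itself: the paper states Lemma~\ref{lem: diff} \emph{without proof} (it is the computational input to Theorem~\ref{thm:hbup1}), so your proposal can only be judged against the claim, not against an argument of the authors. For the first two formulas, and for the first two factors of the constant, your plan is correct and surely the intended one: logarithmic differentiation of $\norm{G(p)}_{r(p)}$, splitting the $p$-dependence between the norm index and the operator powers, with cyclicity of $\tr_2$ removing all ordering issues at $p=2$. One bookkeeping remark: the first picture is the same function of $q$ that the second picture is of $p$, so the chain-rule factor $dq/dp|_{p=2}=-1$ must flip \emph{both} terms, not only the entropy term; carried out correctly, your method gives $+\frac{1}{4}\tr_2(D_{\reallywidehat{\omega}})^{1/2}\log\tr_2(D_{\reallywidehat{\omega}})-\frac{1}{4}\tr_2(D_{\reallywidehat{\omega}})^{-1/2}S(\reallywidehat{\omega}\|\psi)$, whose first term disagrees in sign with the printed lemma. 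This is harmless where the lemma is used, since there $\omega$ is a state and $\tr_2(D_{\reallywidehat{\omega}})=1$, but it means your "careful bookkeeping" will not literally reproduce the stated first formula.

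The genuine gap is in the third formula, exactly at the step you flagged as the real work. Your key claim---that the non-smooth contribution coming from the kernel of $e:=\delta^{-1}\fF(1)$ does not affect the one-sided derivative---is false. Write $t=\tfrac12-\tfrac1p\le 0$ and $\fF(D_\varphi^{t})=\delta e+tB+O(t^2)$ with $B=\fF(\log D_\varphi)$. Perturbation of singular values gives
\[
\norm{\fF(D_\varphi^{t})}_1=\delta+t\,\mathrm{Re}\,\tr_2(eBe)+|t|\,\norm{(1-e)B(1-e)}_1+O(t^2)\;,
\]
because the compression of the perturbation to $\ker e$ creates singular values of size $|t|$, each contributing its full modulus to the trace norm; only the corner blocks $eB(1-e)$ and $(1-e)Be$ enter at order $t^{2}$. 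Hence the left derivative at $t=0^{-}$ is $\mathrm{Re}\,\tr_2(eBe)-\norm{(1-e)B(1-e)}_1$, and the subtracted term is strictly positive whenever $(1-e)\fF(\log D_\varphi)(1-e)\neq 0$, which is the generic case. A concrete check: for the $\mathbb{Z}_2$ group subfactor, $\delta=\sqrt2$, $\fF(f)=\tfrac{1}{\sqrt 2}\bigl(\begin{smallmatrix} f(0) & f(1)\\ f(1) & f(0)\end{smallmatrix}\bigr)$, and a faithful state with $\varphi(0)>\varphi(1)>0$, one computes $\norm{\fF(D_\varphi^{t})}_1=\sqrt2\,\varphi(1)^{t}$ for $t<0$, so the left derivative is $\sqrt2\,\log\varphi(1)$, not the smooth value $\tfrac{1}{\sqrt2}(\log\varphi(0)+\log\varphi(1))$. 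So the third displayed formula is only the ``smooth part'' of a derivative that genuinely has a kink at $p=2$; a proof along your lines must either show the kink vanishes (it does not in general) or carry the extra term $-\tfrac{1}{4\delta}\norm{(1-e)\fF(\log D_\varphi)(1-e)}_1$, which moreover propagates into, and weakens, the constant obtained in Theorem~\ref{thm:hbup1}. Separately, the identity you invoke, $\tr_2\fF(x)=\delta^{-1}\tr_2(x)$, is not valid (in a group planar algebra $\tr_2\fF(x)$ sees only the value of $x$ at the group identity); what actually produces the smooth part $\mathrm{Re}\,\tr_2(eBe)=\delta^{-1}\tr_2(\log D_\varphi)$ is the planar identity $e\,\fF(x)\,e=\delta^{-1}\tr_2(x)\,e$ together with $\tr_2(e)=1$.
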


\section{QFA and Quantum Entanglement}
Here we convert our pictures to the style in~\cite{JL17,JaLiWo18}, which are not shaded. 
The Fourier transform of a multiple of the projection onto the zero-vector for the group $\mathbb{Z}_d$,  namely $d^{1/2}\ket{0}\bra{0}$,  is the identity,
\begin{equation}\label{FProj}
\scalebox{.8}{$
\mathfrak{F}\ 
\raisebox{-.5cm}{\color{blue}
\tikz{
\fqudit {0}{-3.5\nn}{1\mn}{.5\nn}{}
\fmeasure {0}{0}{1\mn}{.5\nn}{}
}}
=
\raisebox{-.5cm}{
\tikz{\color{blue}
\draw (0,0)--(0,1+1/6);
\draw (2/3,0)--(2/3,1+1/6);
}}
= d^{-1/2}\sum_{k\in \mathbb{Z}_d}
\raisebox{-.5cm}{
\tikz{\color{blue}
\fqudit {0}{-3.5\nn}{1\mn}{.5\nn}{\phantom{-}k}
\fmeasure {0}{0}{1\mn}{.5\nn}{-k}
}}\ 
$}
\;.
\end{equation}
One can identify a linear transformation $T$ on $\mathbb{C}^d$ as a vector  $\widehat{T}$ in $\mathbb{C}^d\otimes\mathbb{C}^d$, namely 
\[
\scalebox{.85}{$\raisebox{-.5cm}{
\tikz{\color{blue}
\fqudit {0}{-3.5\nn}{1\mn}{.5\nn}{\phantom{-}i}
\fmeasure {0}{0}{1\mn}{.5\nn}{-j}
}}\quad\longleftrightarrow\ 
\raisebox{-.5cm}{
\tikz{\color{blue}
\fqudit {2.5\mn}{1.5\nn}{1\mn}{.5\nn}{-j}
\fqudit {0}{0}{1\mn}{.5\nn}{\phantom{-}i}
\draw (-.5\nn,0)--(-.5\nn,.5) ;
\draw (-2.5\nn,0)--(-2.5\nn,.5) ;
}}$}
\;,
\ \text{ the picture for }
\ket{i}\bra{j}\longleftrightarrow\ket{-j,i}\;.
\]
Identifying \eqref{FProj} in this way gives the illustration of how  the Fourier transform  $\mathfrak{F}$ 
acts on product states. In particular, 
\[
\scalebox{.8}{$\mathfrak{F} \left(\right.\,  \raisebox{-.05cm}{$  
\tikz{\color{blue}
\fqudit{1\mn}{0}{.5\mn}{.5\nn}{}{}
\fqudit{2.5\mn}{0}{.5\mn}{.5\nn}{}{}
}$}\, \left.\right)
= d^{-1/2}\sum_{k\in \mathbb{Z}_d}
\raisebox{-.4cm}{
\tikz{\color{blue}
\fqudit {2.5\mn}{1.5\nn}{1\mn}{.5\nn}{-k}
\fqudit {0}{0}{1\mn}{.5\nn}{\phantom{-}k}
\draw (-.5\nn,0)--(-.5\nn,.5) ;
\draw (-2.5\nn,0)--(-2.5\nn,.5) ;
}}
=
\raisebox{-.35cm}
{\tikz{\color{blue}
\fdoublequdit{1\mn}{0}{.5\mn}{.5\nn}{}{}
}}
= d^{1/2} \ket{\text{Max}} 
$}\;.
\]
If $d=2$, then $k=-k\in\mathbb{Z}_d$, and $\ket{\text{Max}}$ is the usual Bell state.

A similar picture and vector $ \ket{\text{Max}}_{n}=\mathfrak{F}\ket{\vec 0}_{n}$ exists for  $n$ qudits  \cite{JaLiWo18}. 
This vector $ \ket{\text{Max}}_{n}$ generalizes the classical Bell state for $d=2$ for qubits to a maximally-entangled state for $n$ qudits of order $d$. 
Furthermore, one can apply $\mathfrak{F}$ to any product basis state; one thereby obtains a Max basis (of maximally entangled states). These are related to  a corresponding $n$-qudit GHZ basis~\cite{GHZ}. In \S4 of~\cite{JL18} one finds expressions for the various $\ket{\text{Max}}_{n}$ basis states, as well as their relation to, and expressions for, the $\ket{\text{GHZ}}_{n}$ basis states. Also see~\cite{LWJ17} for a Quon interpretation.

The quantum uncertainty principle QUP-1 in Theorem \ref{Thm:Fusion} for subfactors gives a lower bound for the entanglement entropy. The product ground state has minimal entanglement entropy. Each Max state has maximal entanglement entropy. In addition, we obtain a relative entropic uncertainty principle for quantum entanglement by applying Theorem~\ref{thm:hbup1}.

\section{Some Future Directions and Goals}\label{Sect:Questions}

We propose a few specific questions, but first cite some general directions that appear ripe for the development of QFA:

    $\bullet$ Establish additive combinatorics for quantum symmetries, such as for unitary modular tensor categories.
    
    $\bullet$ Establish a general theory for $\mathfrak{F}(\sP_{n,\pm})$, where  $\fF^{2n}=1$.

    $\bullet$  Understand Fourier analysis for infinite quantum symmetries within a  pictorial framework, such as surface algebras.
        
    $\bullet$ Seek further applications of QFA to quantum information. 

\phantomsection
\subsection{Questions on the Universal Inequality} 
There are three central problems for the classical Brascamp-Lieb inequality on $\bR$:

a. Can one find for which tuples of linear maps the best constant is finite?

b. Can the best constant be achieved? If so, it is proved in \cite{BCCT} that there exist Gaussian extremizers. 

c. Are all extremizers Gaussian?

Since all $\sP_{n,\pm}$, $n \in \mathbb{N}$, are finite dimensional, the best constant $C$ of the universal inequality is finite and the extremizer exists by the compactness.
We ask the following questions for the universal inequality:
\begin{question}
In which case, is the best constant achieved by tensor product of $n$-projections (the natural generalization of bi-projections)?
\end{question}

\begin{question}
If further, all the input belong to $\sP_{2,\pm}$, are the extremizers all bishifts of biprojections?
\end{question}

\begin{question}[\bf Finite abelian groups]
What are the best constants of the universal inequality on finite abelian groups?

\end{question}

\subsection{Questions on Subfactor Planar Algebras}
Suppose $\sP_{\bullet,\pm}$ is an irreducible subfactor planar algebra.
\begin{conjecture}
For any $\varepsilon>0$, there exists $\varepsilon'$ such that if $x\in \mathscr{P}_{2,\pm}$, 
$\|x-P\|_2<\varepsilon'$ and $\|\FS(x)-\lambda Q\|_2<\varepsilon'$, for some projections $P, Q$ and constant $\lambda$, then there is a biprojection $B$, such that $\|x-B\|<\varepsilon$.
\end{conjecture}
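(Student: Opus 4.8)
The plan is to prove the qualitative statement by a compactness-and-continuity argument, exploiting that $\sP_{2,\pm}$ are finite dimensional, and then to isolate the effective version as the genuine difficulty. First I would recast the conclusion as a stability statement for the closed set of biprojections
\[
\mathscr{B}=\{B\in\sP_{2,+}:\ B=B^*=B^2,\ \FS(B)\in\bC\cdot\mathrm{Proj}\},
\]
where $\mathrm{Proj}$ denotes projections in $\sP_{2,-}$. By the very definition of a biprojection, $\mathscr{B}$ is exactly the intersection of the variety of projections with the $\FS$-preimage of the multiples of projections; both are closed and $\FS$ is linear, so $\mathscr{B}$ is closed (indeed real semialgebraic). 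Suppose the conjecture fails: then there are $\varepsilon>0$ and sequences $x_n,P_n,Q_n,\lambda_n$ with $\norm{x_n-P_n}_2<1/n$ and $\norm{\FS(x_n)-\lambda_nQ_n}_2<1/n$, yet $\mathrm{dist}_2(x_n,\mathscr{B})\ge\varepsilon$ for all $n$. Since $\norm{P_n}_2\le\norm{1}_2$, the $x_n$ are bounded; by Plancherel $\norm{\FS(x_n)}_2=\norm{x_n}_2$, so $\lambda_nQ_n$ is bounded, and because nonzero projections have $L^2$-norm bounded below in finite dimensions, the scalars $\lambda_n$ are bounded as well (taking $\lambda_n=0$ when $Q_n=0$).

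Next I would extract a common convergent subsequence, using compactness of bounded sets and of the set of projections: $x_n\to x_\infty$, $P_n\to P_\infty$, $Q_n\to Q_\infty$, $\lambda_n\to\lambda_\infty$. Passing to the limit gives $x_\infty=P_\infty$, a projection, and (by continuity of $\FS$) $\FS(x_\infty)=\lambda_\infty Q_\infty$, a multiple of a projection; hence $x_\infty\in\mathscr{B}$. On the other hand $\mathrm{dist}_2(x_n,\mathscr{B})\ge\varepsilon$ survives the limit to give $\mathrm{dist}_2(x_\infty,\mathscr{B})\ge\varepsilon$, contradicting $x_\infty\in\mathscr{B}$. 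Since all norms on $\sP_{2,+}$ are equivalent, this controls $\norm{x-B}$ as in the statement. The one case needing separate care is $x_\infty=0$, i.e. $P=0$: here one either restricts to nonzero $P$ (the intended regime, as ``near a projection'' is only meaningful near a nonzero one) or invokes the convention on the trivial biprojection. Thus, for each fixed $\sP_\bullet$, some $\varepsilon'(\varepsilon)$ exists.

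The main obstacle — and what I take to be the real content of the conjecture — is to make $\varepsilon'$ \emph{effective}, i.e. to produce an explicit modulus $\varepsilon'=\varepsilon'(\varepsilon)$ (ideally a power law) in terms of the subfactor data $\delta$, the dimension of $\sP_{2,\pm}$, and the spectral data of $\FS$. The natural route is a {\L}ojasiewicz-type inequality for the defect functional
\[
\Phi(x)=\mathrm{dist}_2(x,\mathrm{Proj})^2+\mathrm{dist}_2\bigl(\FS(x),\bC\cdot\mathrm{Proj}\bigr)^2,
\]
of the form $\mathrm{dist}_2(x,\mathscr{B})\le C\,\Phi(x)^{\alpha}$ with $C,\alpha>0$. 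Because $\mathrm{Proj}$ and $\bC\cdot\mathrm{Proj}$ are semialgebraic and $\FS$ is linear, such an inequality holds abstractly; the difficulty is controlling $C$ and $\alpha$ near the singular locus of $\mathscr{B}$, where the lattice of biprojections degenerates — distinct biprojections collide and ranks jump — and where the two nonsmooth distance functionals entering $\Phi$ are hardest to linearize.

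A complementary, quantitative-but-local approach is a perturbative/implicit-function argument around a fixed nondegenerate biprojection $B$: linearize the projection equation $x=x^*=x^2$ together with the condition ``$\FS(x)$ is a multiple of a projection,'' and use the spectral gap of $\FS$ furnished by Plancherel, the positivity from the Schur product theorem (Theorem \ref{thm: Schur Product}), and Bisch's algebraic characterization of biprojections (the convolution-idempotent relation $B\ast B\propto B$) to show the normal Hessian of $\Phi$ is coercive at $B$. This yields clean local stability near each nondegenerate biprojection, after which one must patch across the finitely many components and singularities of $\mathscr{B}$. I expect this patching near the degenerate biprojections, together with the control of the nonsmooth distances in $\Phi$ at those points, to be the hard part.
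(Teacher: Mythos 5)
This statement is one of the paper's open conjectures: the paper offers no proof of it, so there is no ``paper proof'' to compare against, and the only question is whether your proposal actually settles what is being asked. Your compactness argument is correct for what it literally proves: with the planar algebra $\sP_\bullet$ fixed in advance, $\sP_{2,\pm}$ is finite dimensional, the set $\mathscr{B}$ of biprojections (augmented by $0$) is closed, projections form a compact set, Plancherel plus the positive lower bound on traces of nonzero projections controls the scalars $\lambda_n$, and the limit point of your contradicting sequence lands in $\mathscr{B}$. The $P=0$ degeneracy you flag is real, and excluding it (or adopting a convention for the trivial biprojection) is the right fix. So under the weak reading in which $\varepsilon'$ may depend on $\sP_\bullet$ as well as $\varepsilon$, the statement is indeed an exercise in soft analysis.

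The gap is that this reading cannot be the content of the conjecture, as your own proposal half-concedes. The authors use precisely this finite-dimensional compactness themselves, in the same section of the paper, to get finiteness of the best constant and existence of extremizers for the universal inequality \eqref{eq:top-braslieb}; a statement that follows by the identical device would not be posed as a conjecture. What is open is the version in which $\varepsilon'$ is uniform over all irreducible subfactor planar algebras (or at least explicit, say in terms of $\delta$ alone), in the spirit of quantitative stability for the extremizer characterizations of \cite{JLW16}. Every constant in your argument is algebra-dependent in an uncontrolled way: the minimal trace of a nonzero projection (used to bound $\lambda_n$ and to separate $0$ from $\mathscr{B}$), the equivalence constant between $\norm{\cdot}_2$ and the unspecified norm $\norm{\cdot}$ in the conclusion, and above all the {\L}ojasiewicz data $C,\alpha$ in your proposed effective route, which exist abstractly for semialgebraic sets but degrade with $\dim\sP_{2,\pm}$ and with the geometry of $\mathscr{B}$ near its singular locus --- exactly the regime you say you cannot yet control. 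Since $\dim\sP_{2,\pm}$ is unbounded over the family of irreducible subfactor planar algebras, no compactness argument in that family is available, and a sequence of counterexamples to the uniform statement could run through infinitely many distinct planar algebras, where your extraction of a convergent subsequence has no meaning. In short: the part of your proposal that is proved is the trivialized reading, and the part that addresses the actual conjecture is a program, not a proof.
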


\begin{question}
Can one characterize the extremizers for the uncertainty principles on $n$-boxes, for $n\geq 3$?
\end{question}

\subsection{Block Renormalization Map and Quantum Central Limit Theorem}
The block map $B_\lambda$ is a composition of convolution and multiplication, 
\begin{align*}
B_\lambda\left(\hskip-.2cm
\scalebox{.8}{$\raisebox{-.3cm}{
\begin{tikzpicture}
\begin{scope}[shift={(-.5,0)}]
\path [fill=lightgray] (.05,-.2) rectangle (.4,.65);
\draw (.4,-.2)--(.4,.65);
\draw (.05,-.2)--(.05,.65);
	\draw [fill=white] (0,0) rectangle (.45,.45);
		\node at (.25,.25) {$x^{\phantom *}$};
		\node at (-.05,.25) {\tiny $\$$};
		\end{scope}
\end{tikzpicture}}$}\right)=\frac{\delta^2}{\norm{x}_2^2}\left(\frac{\lambda}{\norm{x}_1}
\scalebox{.8}{$
\raisebox{-.75cm}{
\begin{tikzpicture}
\path [fill=lightgray] (.05,-.25) rectangle (1.25,1.55);
\draw (.05,-.25)--(.05,1.55);
\draw (1.25,-.25)--(1.25,1.55);
\draw [fill=white] (.4,1.3) arc[radius=.15, start angle=180, end angle=90]--(.75,1.45) arc[radius=.15, start angle=90, end angle=0]--(.9,.85) arc[radius=.15, start angle=360, end angle=270]--(.55,.7) arc[radius=.15, start angle=270, end angle=180];
\begin{scope}[yscale=-1,shift={(0,-1.3)}]
\draw [fill=white] (.4,1.3) arc[radius=.15, start angle=180, end angle=90]--(.75,1.45) arc[radius=.15, start angle=90, end angle=0]--(.9,.85) arc[radius=.15, start angle=360, end angle=270]--(.55,.7) arc[radius=.15, start angle=270, end angle=180];
\end{scope}
\draw [fill=yellow] (0,.85) rectangle (.45,1.3);
\draw [fill=cyan] (.85,.85) rectangle (1.3,1.3);
\draw [fill=white] (.85,0) rectangle (1.3,.45);
\draw [fill=green] (0,0) rectangle (.45,.45);
\node at (.25,.25) {$x^*$};
\node at (1.1,.25) {$x^{\phantom *}$};
\node at (1.1,1.1) {$x^*$};
\node at (.25,1.1) {$x^{\phantom *}$};
\node at (.5,.25) {\tiny $\$$};
\node at (.8,.25) {\tiny $\$$};
\node at (.5,1.1) {\tiny $\$$};
\node at (.8,1.1) {\tiny $\$$};
\end{tikzpicture}
}
$}+\frac{(1-\lambda)}{\norm{x}_\infty}
\scalebox{.8}{$
\raisebox{-.75cm}{
	\begin{tikzpicture}
	\path [fill=lightgray] (.05,-.25) rectangle (1.25,1.55);
	\draw (.05,-.25)--(.05,1.55);
	\draw (1.25,-.25)--(1.25,1.55);
	\draw [fill=white] (.4,1.3) arc[radius=.15, start angle=180, end angle=90]--(.75,1.45) arc[radius=.15, start angle=90, end angle=0]--(.9,.85) arc[radius=.15, start angle=360, end angle=270]--(.55,.7) arc[radius=.15, start angle=270, end angle=180];
	\begin{scope}[yscale=-1,shift={(0,-1.3)}]
	\draw [fill=white] (.4,1.3) arc[radius=.15, start angle=180, end angle=90]--(.75,1.45) arc[radius=.15, start angle=90, end angle=0]--(.9,.85) arc[radius=.15, start angle=360, end angle=270]--(.55,.7) arc[radius=.15, start angle=270, end angle=180];
	\end{scope}
	\path [fill=white](.4,.85) rectangle (.9,.45);
	\draw (.4,.85) rectangle (.4,.45);
	\draw (.9,.85) rectangle (.9,.45);
	\draw [fill=green] (0,.85) rectangle (.45,1.3);
	\draw [fill=white] (.85,.85) rectangle (1.3,1.3);
	\draw [fill=cyan] (.85,0) rectangle (1.3,.45);
	\draw [fill=yellow] (0,0) rectangle (.45,.45);
	\node at (.25,1.1) {$x^*$};
	\node at (.25,.25) {$x^{\phantom *}$};
	\node at (1.1,.25) {$x^*$};
	\node at (1.1,1.1) {$x^{\phantom *}$};
	\node at (.5,.25) {\tiny $\$$};
	\node at (.8,.25) {\tiny $\$$};
	\node at (.5,1.1) {\tiny $\$$};
	\node at (.8,1.1) {\tiny $\$$};
	\end{tikzpicture}
}$}
\right).
\end{align*}
The limit points of the iteration of the block map are all biprojections for finite-index, irreducible subfactors \cite{JLWscm}. We regard this result as a quantum 2D central limit theorem.

\begin{conjecture}\label{Conj: 2D central limit}
For any $f\in L^{\infty}(\mathbb{R}^n) \cap L^{1}(\mathbb{R}^n) \cap L^{2}(\mathbb{R}^n)$,
$f$ converges either to $0$, or to a Gaussian function, under the action of the iteration of the block map $2^nB_{\lambda}$.
\end{conjecture}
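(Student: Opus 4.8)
The plan is to treat this as a classical central-limit / renormalization statement dual to the quantum result of \cite{JLWscm}, exploiting the Fourier self-duality that makes Gaussians play the role that biprojections play in the subfactor setting. First I would fix the precise classical block map: on $\bR^n$ the two summands defining $B_\lambda$ become, respectively, a rescaled convolution square of $|f|^2$ and a rescaled pointwise (multiplication) square, each normalized by the indicated $L^1$, $L^2$, and $L^\infty$ norms, with the factor $2^n$ absorbing the Jacobian of the $\tfrac{1}{\sqrt 2}$-dilation so that the $L^2$ scale is preserved at each step. The first structural step is to record that, under the classical Fourier transform, convolution and multiplication are interchanged, so $2^nB_\lambda$ is \emph{self-dual}: it commutes with $F$ up to the interchange $\lambda\leftrightarrow 1-\lambda$. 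This is the exact analogue of the quantum Fourier transform intertwining the two $C^*$-algebra products, and it immediately singles out the Gaussian class, which is preserved by convolution, by multiplication, and by $F$.

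Second, I would characterize the fixed points. A nonzero fixed point $f$ must satisfy a coupled functional equation asserting that both its convolution square and its multiplication square reproduce $f$ after the prescribed rescaling and normalization. On the Fourier side these become equations of the same shape; writing $\widehat f>0$ and taking logarithms reduces each to a scaling relation of Cauchy type whose integrable solutions are quadratic, i.e.\ $f(x)=c\,e^{-\langle x,Ax\rangle}$ with $A>0$ (the rigidity being, in spirit, the classical characterization of the normal law as in Cramér's theorem). Thus $\mathrm{Fix}(2^nB_\lambda)=\{0\}\cup\{\text{Gaussians}\}$.

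Third, to upgrade the fixed-point analysis to convergence I would introduce a monotone Lyapunov functional whose extremizers are precisely the Gaussians. The natural candidate, given the theme of this paper, is built from the sharp Hausdorff-Young / entropy-power functional: the ratio $\norm{\widehat f}_{q}/\norm{f}_p$ (equivalently a normalized Shannon entropy) is extremized only by Gaussians by Beckner's inequality, and one step of $2^nB_\lambda$ should move $f$ strictly toward that extremal set unless $f$ is already Gaussian. Combining strict monotonicity of this functional with precompactness of the $L^2$-normalized orbit (using that $L^1\cap L^2\cap L^\infty$ is preserved and the normalizations keep the orbit bounded) shows every limit point is a fixed point, hence $0$ or a Gaussian; a local contraction / spectral-gap estimate of the linearization at the Gaussian fixed manifold then promotes subsequential convergence to genuine convergence.

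The main obstacle is the third step: producing a \emph{strict} Lyapunov functional for the combined convolution-and-multiplication dynamics and ruling out oscillation or non-Gaussian limit cycles. The two operations scale oppositely---convolution spreads mass while pointwise multiplication concentrates it---so the whole content lies in showing that the $\lambda$-interpolation, together with the $L^1/L^2/L^\infty$ normalizations, is globally dissipative toward the Gaussian manifold rather than merely fixing it. I expect the sharp classical inequalities (Hausdorff-Young, Young's convolution inequality, and the entropy-power inequality), each with its Gaussian-only equality case, to supply the needed strictness, with the $\lambda\leftrightarrow 1-\lambda$ self-duality guaranteeing that the convolution and multiplication halves cannot trade off to manufacture a spurious non-Gaussian attractor.
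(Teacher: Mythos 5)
This statement is one of the paper's open problems: it appears as Conjecture~\ref{Conj: 2D central limit} in \S\ref{Sect:Questions}, and the paper gives no proof of it. The only proved result in this direction, quoted from \cite{JLWscm}, is the quantum statement that the limit points of the iterated block map are biprojections for \emph{finite-index, irreducible subfactors} --- a setting where $\sP_{2,\pm}$ is finite dimensional, so the compactness of the normalized orbit, which your argument needs, is automatic. Your proposal is therefore not an alternative route to a known proof; it is an outline of a possible attack on an open problem, and as such it has concrete gaps rather than differences of method.

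The most serious gap is that your third step --- a strictly monotone Lyapunov functional built from the Hirschman--Beckner entropy --- is not an available tool but is itself the paper's Conjecture~\ref{Conj: entropy monoticity}, stated immediately after the one you are trying to prove and equally open; invoking it makes the argument circular with respect to the paper's own list of unknowns, and even granting monotonicity you would still need \emph{strictness} off the Gaussian set, which no sharp-inequality equality case directly supplies for the combined convolution-plus-multiplication dynamics. Second, your precompactness claim fails on $\bR^n$: bounded subsets of $L^1\cap L^2\cap L^\infty(\bR^n)$ are not precompact in $L^2(\bR^n)$ (translation and dilation defects; mass can spread or escape, which is exactly why the conjecture must allow convergence to $0$), so ``every limit point is a fixed point'' does not follow without a tightness and equicontinuity argument that you do not provide --- this is precisely the obstruction that finite dimensionality removes in \cite{JLWscm}. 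Third, the asserted self-duality of $2^nB_\lambda$ under $F$ with $\lambda\leftrightarrow 1-\lambda$ is only heuristic: the normalizations $\norm{f}_1$ and $\norm{f}_\infty$ do not exchange under $F$ (one has only $\|\widehat f\|_\infty\leq\|f\|_1$, with equality tied to positivity), so the intertwining holds at best up to constants that vary along the orbit. Finally, the fixed-point rigidity step presumes $\widehat f>0$ before taking logarithms, which is unjustified for a general fixed point, and the concluding ``spectral-gap'' step linearizes a map that is not differentiable in any obvious sense (because of the $\|\cdot\|_1$, $\|\cdot\|_\infty$ normalizations) at a \emph{noncompact} fixed manifold of Gaussians (translations, phases, covariances), so local contraction would at minimum require quotienting by that symmetry group. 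In short: your skeleton matches the expected strategy suggested by the quantum analogue, but each of the three load-bearing steps (strict entropy monotonicity, orbit compactness, local contraction) is exactly where the open content of the conjecture lies.
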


\begin{conjecture}\label{Conj: entropy monoticity}
The Hirschman-Beckner entropy decreases under the action of the block map $2^nB_{\lambda}$, for any $f\in L^{\infty}(\mathbb{R}^n) \cap L^{1}(\mathbb{R}^n) \cap L^{2}(\mathbb{R}^n)$. The same question remains for finite cyclic groups.
\end{conjecture}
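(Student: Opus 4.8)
The plan is to translate the block map to the classical setting and then track the Hirschman--Beckner entropy through one step of the iteration. Under the classical Fourier transform $F$ the convolution picture defining $B_\lambda$ becomes a convolution of position densities and the multiplication picture becomes a pointwise product, so that on $\mathbb{R}^n$ the renormalized map reads
\[
2^n B_\lambda(f)\;=\;\frac{c}{\norm{f}_2^2}\Bigl(\tfrac{\lambda}{\norm{f}_1}\,C(f)\;+\;\tfrac{1-\lambda}{\norm{f}_\infty}\,M(f)\Bigr),
\]
where $C(f)$ is built from convolutions of $f$ and $\bar f$, $M(f)$ from their pointwise products, and the constants are fixed so that $\norm{2^n B_\lambda f}_2=\norm{f}_2$. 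The decisive structural feature, inherited from the subfactor picture calculus, is Fourier self-duality: $F$ interchanges $C$ and $M$, so $\widehat{B_\lambda f}$ equals $B_{1-\lambda}(\widehat f)$ up to the same normalization. I would record the entropy in the Fourier-symmetric form $E(f)=S(|f|^2)+S(|\widehat f|^2)$ with $S(g)=-\int g\log g$ and $\norm{f}_2=1$; by Hirschman and Beckner \cite{Hirschman,Beckner} this functional is bounded below with equality exactly at Gaussians, which are the fixed points predicted by Conjecture~\ref{Conj: 2D central limit}, so entropy monotonicity is decrease toward that minimum.

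Second, I would use that $E$ is (minus) the derivative at $p=2$ of the Hausdorff--Young ratio $\norm{\widehat f}_q/\norm{f}_p$, which Beckner controls through the hypercontractivity of the Mehler (Ornstein--Uhlenbeck) semigroup $e^{-tH}$, with $F=e^{i\pi H/2}$ \cite{Beckner,Nelson1,Gross}. The block map is assembled from exactly the two operations whose $L^p$ norms the sharp Young inequality of Beckner and Brascamp--Lieb \cite{Beckner,BrascampLieb} estimates, with Gaussian extremizers. I would therefore reduce $E(2^n B_\lambda f)\le E(f)$ to a pair of Fourier-dual estimates, one for $C$ and one for $M$, and invoke the self-duality $\lambda\leftrightarrow 1-\lambda$ to pair the position-entropy change produced by one picture with the momentum-entropy change produced by its dual. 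The two then assemble into a single inequality; the case $\lambda=\tfrac12$, where the map is exactly self-dual, fixes the sharp constant and the general $\lambda$ follows from the same bounds on $\norm{f}_1,\norm{f}_2,\norm{f}_\infty$ appearing in the normalization.

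I expect the main obstacle to be a convexity mismatch: $E$ is concave in $|f|^2$, so the entropy of the mixture defining $2^n B_\lambda f$ is bounded \emph{below}, not above, by the mixture of entropies---the wrong direction for a direct estimate. The way around this is to pass to the infinitesimal, Fisher-information formulation. Using the de Bruijn identity, which expresses $\frac{d}{dt}S$ along the heat/Ornstein--Uhlenbeck flow as a Fisher information, together with the monotonicity of Fisher information under convolution and rescaling (the mechanism behind the monotonicity of entropy in the central limit theorem), I would show that the derivative of $E$ along the discrete block-map iteration is non-positive. Establishing this position--momentum ``variance-drop'' inequality compatibly with the self-dual convex combination is the technical heart of the proof, and the step where the duality must be used quantitatively rather than merely structurally.

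Finally, the finite cyclic group case is genuinely harder and I would treat it separately. On $\mathbb{Z}_d$ there is no heat semigroup, no de Bruijn identity, and no Gaussian extremizer, so neither the Fisher-information route nor the sharp Young inequality with Gaussian equality applies. Here I would attempt a direct second-order analysis of $E$ along one step of $2^n B_\lambda$, controlling the two pictures by the discrete Young and Schur-product inequalities available on $\mathbb{Z}_d$ (Theorem~\ref{Thm:Fusion}), while flagging that identifying the correct discrete extremizers---and hence the sharp entropy bound---remains open, consistent with the conjectural phrasing.
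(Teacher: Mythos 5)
This statement is posed in the paper as an open conjecture; the paper contains no proof of it, so your attempt cannot be measured against an existing argument and must stand on its own. It does not: what you have written is a research program whose decisive step is explicitly deferred. You yourself identify the ``position--momentum variance-drop inequality compatible with the self-dual convex combination'' as the technical heart and never establish it, and the convexity obstruction you correctly diagnose (concavity of entropy bounds the entropy of the mixture from \emph{below}, the wrong direction) is exactly the point where all known tools stall. The proposed escape via de Bruijn and Fisher information is not available as stated: de Bruijn's identity differentiates entropy along a continuous heat or Ornstein--Uhlenbeck flow, whereas $2^nB_\lambda$ is a discrete nonlinear map, and ``the derivative of $E$ along the discrete block-map iteration'' is not a defined object unless you construct an interpolating flow, which you do not. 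Moreover, monotonicity of Fisher information under convolution controls only the $C(f)$ term; the pointwise-product term $M(f)$ is a convolution on the momentum side, so it can \emph{increase} position-space Fisher information, and nothing in the proposal shows the two effects combine favorably inside the convex combination.

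Two further assertions are incorrect or unjustified as stated. First, the claimed exact self-duality $\widehat{2^nB_\lambda f}=2^nB_{1-\lambda}\widehat f$ fails at the level of the normalizations: while $F$ exchanges convolution and multiplication, it does not exchange the prefactors $\lambda/\norm{f}_1$ and $(1-\lambda)/\norm{f}_\infty$, since in general $\norm{\widehat f\,}_1\neq\norm{f}_\infty$ (Hausdorff--Young gives only $\norm{\widehat f\,}_\infty\leq\norm{f}_1$); hence even the $\lambda=\tfrac12$ case is not genuinely self-dual, and the pairing of dual estimates cannot be made ``exact'' as your sketch requires. Second, the normalization $\norm{2^nB_\lambda f}_2=\norm{f}_2$ is asserted by fiat; the map is nonlinear and no such isometry property is proved (in the subfactor model of \cite{JLWscm} the prefactor $\delta^2/\norm{x}_2^2$ does not make $B_\lambda$ norm-preserving either). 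Your closing concession that the cyclic-group case remains open is consistent with the paper, which poses both cases as open; but the $\mathbb{R}^n$ case is equally open in your write-up. As a heuristic linking the conjecture to Gaussian extremizers and to Conjecture~\ref{Conj: 2D central limit}, the outline is sensible; as a proof, it has a genuine gap at its central inequality.
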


\section{Significance Statement}
Quantum methods give a new face to Fourier analysis. Pictorial intuition yields new insights, motivates new inequalities, and new uncertainty principles for different quantum symmetries. 

\section{Acknowledgement}
We are grateful for informative discussions with a number of colleagues, and especially thank~Dietmar~Bisch, Kaifeng~Bu, Pavel~Etingof, Xun~Gao, Leonard~Gross, J.~William~Helton, Vaughan~Jones, Christopher~King, Robert~Lin, Sebastian~Palcoux,  Terence~Tao, Zhenghan~Wang, Feng~Xu, and Quanhua~Xu. This research was supported in part by a grant TRT 0159 on mathematical picture language from the Templeton Religion Trust.  AJ was partially supported by ARO Grant W911NF1910302.  CJ was partially supported by NSFC Grant 11831006.   ZL was partially supported by Grant 100301004 from Tsinghua University.  JW was partially supported by NSFC Grant 11771413.

\end{document}